\documentclass{article}
\usepackage{amsmath}
\usepackage{amssymb}
\usepackage{amsthm}
\usepackage[margin=1in]{geometry}
\usepackage{graphicx}
\usepackage{tikz}
\usepackage{url}
\usetikzlibrary{arrows.meta,positioning}

\newcommand{\R}{\mathbb{R}}
\newcommand{\Hh}{\mathbb{H}}

\newtheorem{theorem}{Theorem}[section]
\newtheorem{remark}{Remark}[section]
\newtheorem{lemma}{Lemma}[section]

\title{Estimating the number of real zeros of linear combinations of radicals of polynomials}
\usepackage{authblk}
\author[1]{Gal Binyamini\thanks{\texttt{gal.binyamini@weizmann.ac.il}}}
\author[1]{Avner Kiro\thanks{\texttt{avner-ephraiem.kiro@weizmann.ac.il}}}
\author[2]{Alexander Logunov\thanks{\texttt{alogunov@mit.edu}}}
\author[1]{Dmitry Novikov\thanks{\texttt{dmitry.novikov@weizmann.ac.il}}}
\author[2]{Dmitrii Zakharov\thanks{\texttt{zakhdm@mit.edu}}}

\affil[1]{Department of Mathematics, Weizmann Institute of Science, Rehovot, Israel}
\affil[2]{Department of Mathematics, Massachusetts Institute of Technology, Cambridge, MA, USA}

\begin{document}

\maketitle

\begin{abstract}
We obtain upper bounds for the number of real zeros of functions of the form
\[
f(x) = \sum_{k=1}^{n} c_k \bigl(P_k(x)\bigr)^{\alpha_k},
\]
where $c_k, \alpha_k \in \mathbb{R}$ and each $P_k$ is a real polynomial of degree at most $d$ that is non-negative on an interval $I\subset \mathbb{R}$. We improve previously known exponential upper bounds for the number of roots on $I$ to bounds that are polynomial in $n$, linear in $d$, and independent of the exponents $\alpha_k$.

For linear combinations of square roots of positive quadratic polynomials on $\mathbb{R}$ we prove the linear bound $2n$, answering a question of N.~Alon. A modification of the argument yields a linear bound for a question of A.~Gabrielov, D.~Novikov, and B.~Shapiro related to Maxwell’s conjecture. 

The article describes two independent approaches: an elementary ODE method in the general case, 
which also gives a polynomial bound for the number of critical points of one dimensional Gaussian mixtures,
and a PDE method for the case of positive quadratic polynomials, which connects the problem to the number of nodal domains of solutions to $\Delta u + \lambda u = 0$ on the punctured hyperbolic plane. As a byproduct of the second approach, we describe a curious relation between axially symmetric harmonic functions on $\mathbb{R}^3\setminus\{(x,0,0)\}$ and Laplace-Beltrami eigenfunctions on the hyperbolic plane with eigenvalue $1/4$.
\end{abstract}

\section{Introduction and main results}

How large can the number of distinct real roots of a function of the form

\begin{equation} \label{sum}
    f(x) = \sum_{i=1}^{n} c_i \sqrt{P_i(x)},
\end{equation}
 be, where \(P_i\) are non-negative quadratic polynomials, the coefficients \(c_i \in \mathbb{R}\), and \(f\) is not identically zero? 
 In the Spring of 2025, N.~Alon gave a talk at MIT, where he asked whether a linear upper bound holds for the number of real roots of $f$. We answer this question affirmatively by proving an upper bound $2n$ for the number of real zeroes of $f$. We also establish a linear bound in Conjecture~1.9 of Gabrielov, Novikov and Shapiro~\cite{GNS}.

This article intentionally focuses only on one dimensional problems and proposes two different methods of estimating the number of real zeroes of functions, which resemble polynomials, but we will say a few words about higher dimensional challenges.

The higher-dimensional problem (starting from dimension $d=2$) of bounding the number of isolated solutions to systems of equations involving radicals and exponentials---in the spirit of extending Bezout's theorem to non-algebraic settings--is a deep question in real algebraic geometry and fewnomial theory. We refer the interested reader to \cite{ADK, GNS, AEH}, but we highlight two notable examples of specific open problems.

Maxwell's problem asks to bound the number of equilibrium points of the Coulomb potential generated by $n$ point charges in $\mathbb{R}^3$---either assuming all charges are positive, or considering charges of mixed signs in generic position. 
Maxwell historically stated $(n-1)^2$ upper bound simply as an exercise.
Recently, an AI assisted counterexample to Maxwell's $(n-1)^2$ conjecture was announced in \cite{ABK} for $n=5$.
The article \cite{ABK} established examples of configurations of $n$ charges with at least $10n-C$ critical points of the potential.
For large $n$, we are not aware of configurations yielding more than a linear number of isolated critical points in $n$.

Meanwhile, current upper bounds remain exponential in $n$ \cite{GNS}, making the search for a polynomial upper bound in $n$ a long standing open problem. Surprisingly, in the setting of strictly positive charges, it is not even known whether the number of critical points is always finite \cite{Eremenko2015}. 

A similar challenge in establishing the finiteness of critical points arises in a second notable example: estimating the number of modes in Gaussian mixtures, see \cite{AEH}. This problem asks for an upper bound on the number of local maxima/critical points of a probability density function defined as a convex combination of $n$ multivariate Gaussians in $\mathbb{R}^d$. Explicitly, the density is given by
\begin{equation*}
    p(x) = \sum_{i=1}^n \frac{w_i}{(2\pi)^{d/2} \sqrt{\det(\Sigma_i)}} \exp\left( -\frac{1}{2} (x-\mu_i)^T \Sigma_i^{-1} (x-\mu_i) \right),
\end{equation*}
where $w_i > 0$ are mixing weights satisfying $\sum_{i=1}^n w_i = 1$, $\mu_i \in \mathbb{R}^d$ and $\Sigma_i$ are symmetric positive-definite $d \times d$ matrices. Estimating the number of the real roots of the gradient system $\nabla p(x) = 0$ is a problem similar to Maxwell's problem. Surprisingly, it is again not known whether the number of critical points is always finite for strictly positive weights $w_i > 0$. For generic covariance matrices and weights, no polynomial upper bounds in terms of  $n$ are currently known. Exponential bounds have been established and we refer the reader to \cite{AEH}, which beautifully illustrates the practical application of the methods of fewnomial theory in just a few pages.

\subsection*{Exponential upper bound and an example with $2(n-1)$ roots.}
\quad 

The statements and examples in this section are well known to specialists, see \cite{ADK}; we include them only for building intuition. We also recommend \cite{A},\cite{K} and \cite{M} for educational purposes. 

Given a sum of two square roots $\sqrt{P_1}+\sqrt{P_2}$ one can multiply it by the conjugate expression  $\sqrt{P_1}- \sqrt{P_2}$ to eliminate all square roots. This simple idea yields that a function
\[
F(x) = \prod_{\sigma \in \{-1,1\}^{n-1}} \left(c_{1}\sqrt{P_{1}(x)} + \sum_{i=2}^{n}\sigma_i c_{i}\sqrt{P_{i}(x)}\right)
\]
is a polynomial of degree at most $2^{n-1}$. The roots of $f(x)$ are a subset of the roots of this new function $F(x)$. This leads to an  upper bound of $2^{n-1}$. 

From the complex point of view, one can extend $f$ to a holomorphic function on a Riemann surface obtained from $\mathbb{C}$ by adding branch points at the roots of the $P_i$; the complex zeros of $F(z)$ correspond to the complex zeros of $f$ on this Riemann surface. In this sense, the number of complex zeros of $f$ can be exponentially large. Nevertheless, the number of real zeros admits a better upper bound, which is the subject of this article.

Below we explain a known example with at least $2(n-1)$ zeroes. Consider the linear span of the functions $\sqrt{x^{2}+k^{2}}$, $k=1,\dots,n$. These functions are linearly independent, and it is possible to find a nontrivial linear combination
\[
f(x)=\sum_{k=1}^{n} C_{k}\sqrt{x^{2}+k^{2}}
\]
such that $f(i)=0$ for $i=1,2,\dots,n-1$. Since $f(x)$ is even, it also has zeros at $i=-1,-2,\dots,-(n-1)$. This construction yields a function with at least $2(n-1)$ real zeros.

\subsection*{Main Theorems}
The main results of the article are the following.

\begin{theorem} \label{Main 1}
Let $a_k, b_k, c_k$ for $k=1, \dots, n$ be real. Then the function
\[
f(x)=\sum_{k=1}^{n}c_{k}\sqrt{(x-a_{k})^{2}+b_{k}^{2}}
\]
either has no more than $2n$ distinct real roots or is identically zero. 
\end{theorem}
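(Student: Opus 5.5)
The plan is to reduce the count of real zeros to a topological count of nodal curves of a function on the upper half-plane $\Hh=\{z=x+iy:\ y>0\}$. There each summand becomes a radial function around an interior point, and real zeros of $f$ become endpoints on $\partial\Hh$ of nodal curves of a single function $u$.

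\textbf{Step 1 (radial interpretation).} Put $w_k=a_k+i|b_k|\in\Hh$; the degenerate case $b_k=0$ is handled by a limiting argument or separately. Since $|z-\bar w|^2-|z-w|^2=4y\,\mathrm{Im}\,w$, we get
\[
\frac{|z-\bar w_k|^2}{y\,|b_k|}=2\cosh d(z,w_k)+2=4\cosh^2\!\bigl(d(z,w_k)/2\bigr),
\]
where $d$ is the hyperbolic distance. Hence $y^{1/2}\cdot 2|b_k|^{1/2}\cosh(d(z,w_k)/2)$ tends to $\sqrt{(x-a_k)^2+b_k^2}$ as $y\to 0$. So, up to the common factor $y^{1/2}$ and constants absorbed into the $c_k$, $f$ is the boundary trace of a sum of radial functions $\phi(d(z,w_k))$.

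\textbf{Step 2 (a $\lambda=1/4$ extension).} The profile $\cosh(d/2)$ is not itself an eigenfunction of $\Delta_{\Hh}=y^2(\partial_x^2+\partial_y^2)$. I would therefore replace it by the radial solution $G(d)$ of $\Delta_{\Hh}G+\tfrac14 G=0$ on $\Hh\setminus\{w\}$ with the same growth $\sim e^{d/2}$ at infinity. This is a Legendre-type function with a logarithmic singularity at $d=0$, and its leading boundary asymptotics, after multiplication by $y^{1/2}$, should still be proportional to $\sqrt{(x-a)^2+b^2}$. The value $1/4$ is forced, because $y^{1/2}$ is the $\lambda=1/4$ eigenfunction. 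Set
\[
u(z)=\sum_k c_k'\,G\bigl(d(z,w_k)\bigr).
\]
This solves $\Delta_{\Hh}u+\tfrac14u=0$ on $\Hh\setminus\{w_1,\dots,w_n\}$, and $y^{1/2}u(x+iy)\to f(x)$ as $y\to0$. The point I would verify carefully is that this boundary limit holds exactly, not only to leading order, via the Poisson-type representation of $y^{1/2}G$.

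\textbf{Step 3 (topological count).} Each simple real zero of $f$ produces a nodal curve of $u$ emanating from $\partial\Hh$; multiple zeros are handled by perturbing the $c_k$. I would then show that every nodal domain of $u$ in $\Hh\setminus\{w_k\}$ must touch either a puncture $w_k$ or the point $\infty$. The mechanism is a maximum-principle / Faber--Krahn argument: $\lambda=1/4$ is the bottom of the spectrum of $\Hh$, so a nodal domain avoiding the punctures and $\infty$ would carry a Dirichlet eigenfunction with eigenvalue $\le 1/4$, which is impossible. The nodal set is then a planar graph whose faces are constrained by the $n$ punctures. An Euler-characteristic count, combined with a local analysis showing that at most two boundary endpoints can be ``charged'' to each puncture, should give at most $2n$ boundary endpoints.

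The main obstacle is this last step: making the ``each domain reaches a puncture'' argument rigorous despite the noncompact end at $\infty$, and getting the sharp constant $2$ rather than something like $4$. Arcs with both endpoints on $\mathbb R$ must be excluded or paired correctly. For this I would first try a comparison with $y^{1/2}$ on the region bounded by such an arc and the real axis, which rules out a nodal domain cut off from all punctures. If the PDE route stalls, the fallback is an ODE/Rolle-type induction on $n$. It uses $\bigl(\sqrt{Q_k}\bigr)''=b_k^2Q_k^{-3/2}$: each step divides by a suitable positive weight and differentiates, losing at most two zeros per eliminated term.
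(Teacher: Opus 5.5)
There is a genuine gap, already at Step~2. For $\lambda=\tfrac14$ the radial equation $G''+\coth(d)\,G'+\tfrac14 G=0$ has the double indicial root $-\tfrac12$ at infinity. So \emph{every} radial solution, regular or with a logarithmic singularity at $w$, is $O(d\,e^{-d/2})$. No radial $\tfrac14$-eigenfunction grows like $e^{d/2}$, and for your $u$ you would get $y^{1/2}u\to 0$, not $f$. The normalization you actually need, $u\approx y^{-1/2}f(x)$, is governed by $\Delta_{\Hh}y^{-1/2}=\tfrac34\,y^{-1/2}$, i.e.\ by $\lambda=-\tfrac34$; you have conflated $y^{1/2}$ with $y^{-1/2}$. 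With $\lambda=\tfrac14$ and the normalization $u\approx y^{1/2}F(x)$, the radial fundamental solution has trace $F\propto((x-a)^2+b^2)^{-1/2}$, which is the Coulomb exponent $-\tfrac12$, not $+\tfrac12$.

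If you repair the eigenvalue to $-\tfrac34$, Step~3 fails for a deeper reason. The extension now grows like $y^{-1/2}f(x)$ at the ideal boundary, so its ratio with a positive radial solution does not tend to $0$. The maximum principle then only excludes nodal domains whose closure avoids both the punctures and $\partial\Hh$. Nodal domains can reach $\partial\Hh$ along every interval where $f\neq 0$ without containing any puncture, and no bound on their number follows. Bounding nodal domains in this growing regime is exactly the question the paper leaves open for exponents $\alpha>-\tfrac12$.

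The missing idea is to move into the decaying regime before doing any nodal geometry, by applying Rolle twice. If $f$ had $2n+1$ zeros, then $f''=\sum_k c_k b_k^2\bigl((x-a_k)^2+b_k^2\bigr)^{-3/2}$ would have $2n-1$; and $f''\equiv 0$ forces $f$ to be linear. Exponent $-\tfrac32$ means $\beta=\tfrac12$, $\lambda=-\tfrac34$, and $f''$ is the $y^{3/2}$-normalized trace of a combination $v$ of fundamental solutions. These have logarithmic singularities at $a_k+ib_k$ and decay like $y^{3/2}$. Now $v$ divided by the positive radial solution tends to $0$ at $\partial\Hh$, and the divergence-form maximum principle forces every nodal domain to contain a singularity. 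This gives:
\begin{itemize}
\item at most $n$ nodal domains;
\item at most $n-1$ nodal curves, after the Bers-type desingularization;
\item at most $2(n-1)$ boundary limit points, hence at most $2n-2$ zeros of $f''$ and at most $2n$ zeros of $f$.
\end{itemize}
So the sharp constant comes from $2(n-1)$ plus the $2$ lost in Rolle, not from ``two endpoints per puncture''.

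Your fallback does not close the gap. Dividing by a power of $Q_n$ and differentiating leaves the class $\sum c_kQ_k^{\gamma}$, because polynomial factors of growing degree appear. So ``at most two zeros lost per eliminated term'' is unsupported; the generic Wronskian/Rolle scheme only yields bounds like $4n^3d+n$.

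Separately, perturbing the $c_k$ can destroy zeros at which $f$ does not change sign. The paper instead shows that such zeros are still limit points of the nodal set, via a barrier comparison with $c\,y^{\beta+1}$ and $y^{-\beta}$.
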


To prove Theorem \ref{Main 1} one needs to know only the maximum principle for harmonic functions in Euclidean space and the fact that $1/|x|^{d-2}$ is harmonic in $\mathbb{R}^d\setminus{\{0\}}$, $d\geq 3$. The following theorem is more general and has a different proof for non-integer $\alpha$.

\begin{theorem} \label{Main 2}
Let $a_k, b_k$ and $c_k$ be real numbers.  Let $\alpha \in (-\infty, -1/2]$. If the function
\[
f(x)=\sum_{k=1}^{n}c_{k}[(x-a_{k})^{2}+b_{k}^{2}]^{\alpha}
\]
is not identically zero, then it has no more than $2(n-1)$ distinct real roots on $\R$.
\end{theorem}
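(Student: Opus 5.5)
The plan is to lift $f$ to a solution of an elliptic equation in one more real dimension than the plane and then count zeros by a nodal-domain argument, as in the proof of Theorem~\ref{Main 1}. Put $m=-2\alpha\ge 1$; we may assume $b_k\ge 0$. Place the points $p_k=(a_k,b_k)$ in the $(x,y)$-plane and set
\[
U(x,y,r)=\sum_{k=1}^n c_k\bigl((x-a_k)^2+(y-b_k)^2+r^2\bigr)^{\alpha},\qquad r\ge 0 .
\]
Formally $\bigl((x-a)^2+(y-b)^2+|z|^2\bigr)^{\alpha}$ with $z\in\R^m$ is the Newtonian kernel in $\R^{2+m}$, since $2+m-2=-2\alpha$. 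So $U$ solves the weighted (Weinstein-type) equation
\[
U_{xx}+U_{yy}+U_{rr}+\frac{m-1}{r}\,U_r=0
\]
in $\{r>0\}$, with zero Neumann data on $\{r=0\}$. The weight is nonnegative because $m\ge1$. This equation satisfies the strong maximum principle up to the boundary $r=0$: for integer $m$ this is the usual one for the axially symmetric extension, and for general $m$ it follows from the Hopf lemma for this operator. Also $f(x)=U(x,0,0)$, and $U$ has positive or negative infinite singularities exactly at the points $(p_k,0)$ with $c_k\neq 0$. I would also symmetrize in $y$: replace $U$ by $\tilde U(x,y,r)=U(x,y,r)+U(x,-y,r)$, which still restricts to $2f$ on the $x$-axis and is even in $y$.

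Next, reduce to counting sign changes. If $f\not\equiv0$, it is real-analytic, so its zeros are isolated. Odd-multiplicity zeros can be turned into simple sign changes, and even-multiplicity zeros can be removed or split into two sign changes, by a small perturbation of the $c_k$. This perturbation must stay in the same family with the same $n$, so it suffices to prove that $f$ has at most $2(n-1)$ sign changes. Suppose then that $f$ changes sign at $x_1<\dots<x_N$. The axis is cut into $N+1$ intervals $J_0,\dots,J_N$ on which $f$ has alternating signs. Each $J_i$ lies in a nodal domain $\Omega_i$ of $U$ in the closed region $\{r\ge0\}$ with the poles removed.

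Two facts control these domains. First, by the maximum principle, every nodal domain contains a pole of the matching sign, because $U\to0$ at infinity (as $\alpha<0$) and $U$ vanishes on $\partial\Omega_i$. Second, if $\sum c_k\ne 0$, then $U\sim(\sum c_k)\,|(x,y,r)|^{2\alpha}$ near infinity, so $J_0$ and $J_N$ have the same sign and lie in one unbounded domain. If $\sum c_k=0$, I would look at the next term of the expansion instead.

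The counting step, which I expect to be the main obstacle, is to show that distinct $J_i$ of the same sign, other than the unbounded pair, cannot share a nodal domain without enclosing an extra pole, and that each pole can serve at most two intervals. This must give $N+1\le 2n-1$, that is, $N\le 2(n-1)$. In the planar model this is a Jordan-curve argument. A path inside $\Omega_i$ joining $J_i$ to $J_j$ through $y>0$, closed up along the axis, separates the intermediate intervals from infinity, so they need their own poles. The reflection $y\mapsto -y$ doubles the available sides, and this is where the factor $2$ comes from.

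In three dimensions, with the extra variable $r$, the separation argument is not automatic. I would first try to run it inside the two-dimensional slice $\{y=0\}$ together with the evenness in $y$. If that fails, I would run it on the quotient of $\{r\ge0\}$ by the symmetry. The goal is a surface, rather than a curve, that is forced to separate; for that I would use the fact that the nodal set is a real-analytic hypersurface meeting the axis transversally at the $x_i$. A fallback, if the topology cannot be controlled, is to induct on $n$ with a Rolle-type division-derivation step adapted to the exponent $\alpha$. That would likely give a weaker constant, so I would regard it only as a backup.
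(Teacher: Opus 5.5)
Your lift is legitimate: $U$ does solve $U_{xx}+U_{yy}+U_{rr}+\frac{m-1}{r}U_r=0$, it restricts to $f$ on the axis, and each nodal domain contains a pole. The gap is the counting step, which you yourself flag, and it cannot be closed in this setting. $U$ keeps two directions transverse to the $x$-axis ($y$ and $r$). So the axis is a codimension-two line in a three-dimensional region. For $\alpha=-1/2$, after even reflection in $r$, $U$ is literally the Coulomb potential of point charges at $(a_k,b_k,0)$ in $\R^3$, and the axis is a line in $\R^3$. A bound on the number of nodal domains then gives no control on crossings. Topologically, one connected nodal surface can cross the axis arbitrarily often without creating new nodal domains. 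Likewise, a path in $\Omega_i$ from $J_i$ to $J_j$, closed up along the axis, is a loop in a three-dimensional region, and such a loop separates nothing.

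Your proposed remedies do not help. The restriction of $U$ to a slice $\{r=0\}$ or $\{y=0\}$ satisfies no two-dimensional elliptic equation; on $\{r=0\}$ one only has $U_{xx}+U_{yy}=-mU_{rr}$, which does not vanish. So no maximum principle controls the nodal domains of a slice. The quotient by $y\mapsto -y$ is still three-dimensional, and that symmetrization merely doubles the number of poles. A Rolle/division-derivation fallback of the kind used for Theorem~\ref{Main 3} yields bounds like $4n^3d+n$, not $2(n-1)$.

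The missing idea is to encode $b_k$ as the \emph{distance to the axis} rather than as a Cartesian coordinate. Then the problem becomes genuinely two-dimensional, with the axis as boundary. For $\alpha=-1/2$ you were one step away. Instead of symmetrizing only under $y\mapsto -y$, average $U$ over all rotations of the $(y,z)$-plane about the $x$-axis.
\begin{itemize}
\item Each point charge becomes a uniform charge on the circle $\{(a_k,b_k\cos\theta,b_k\sin\theta)\}$.
\item The values on the axis are unchanged, since every point of that circle is at distance $\sqrt{(x-a_k)^2+b_k^2}$ from $(x,0,0)$.
\item The result is a function $h(x,\rho)$ solving $h_{xx}+h_{\rho\rho}+\rho^{-1}h_\rho=0$ in the half-plane $\{\rho>0\}$, with $n$ point singularities at $(a_k,|b_k|)$.
\end{itemize}
For $\alpha<-1/2$ the paper uses the analogue: with $\beta=-\alpha-1$, take a combination $v$ of radial fundamental solutions of $\Delta_{\Hh}-\beta(\beta+1)$ centred at $a_k+i|b_k|$, with $v(x+iy)=y^{\beta+1}(f(x)+o(1))$.

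In two dimensions the count closes. There are at most $n$ nodal domains. After resolving singular points with Bers' theorem, there are at most $n-1$ nodal curves, each with at most two endpoints on the axis. This is where the factor $2$ really comes from.

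Your reduction to sign changes is also unjustified. For a general linear family, a bound on sign changes does not bound distinct zeros (consider $\mathrm{span}\{x^2\}$). Splitting several even-order zeros at once requires an element of the span with prescribed signs at many points, and you do not produce one. The paper avoids this by showing directly that every zero of $f$, sign-changing or not, is a limit point of the nodal set. In the hyperbolic case it does this by applying the minimum principle for $\operatorname{div}(y^{-2\beta}\nabla h)=0$ to $h=(v-cy^{\beta+1})/y^{-\beta}$.
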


Theorem~\ref{Main 2} for $\alpha=-1/2$  implies that, for any Coulomb  potential generated by $n$ point charges in $\mathbb{R}^3$, the intersection of the zero set of the potential with any line is either the whole line or has at most $2(n-1)$ points.
If such a Coulomb potential is not identically zero on a line, we also show the derivative of the potential along the line vanishes at no more than $4n-2$ points on the line, which establishes a linear bound for Conjecture 1.9 from \cite{GNS}, see Appendix.
\begin{remark}
    While this paper was in the final stages of preparation, we became aware of a preprint \cite{Oliveira}, in which the sharp bound of $2n-1$ on the number of critical points was established by different methods.
\end{remark}

\begin{remark}
The restriction on the exponent, $\alpha \le -1/2$, is connected to the spectral properties of the Laplace-Beltrami operator on the hyperbolic plane $\Hh:$ $\lambda_1(\Hh)=1/4$. 

 The case $\alpha>1/2$ appears to be more difficult and we don't know how to obtain sharp results in this case. We describe a related question on the number of nodal domains of solutions to $\Delta u + \lambda u=0$ on the Lobachevsky plane in Section \ref{Sec:Proof}.
\end{remark}

The following theorem is more flexible and provides an estimate for the number of real zeroes of a linear combination of radicals of polynomials of arbitrary degree. The estimate is weaker, but the result is quite general.

 \begin{theorem} \label{Main 3}
Let $P_k$, $k=1,\dots,n$ be real polynomials of one variable of degree at most $d$ and assume that each $P_k$ is positive on an interval $I=(a,b)$, where $a,b \in \mathbb{R}\cup\{+\infty,-\infty\}$. Let $\alpha_k$ be arbitrary real numbers. If $P_k^{\alpha_k}$ are linearly independent on $I$, $c_k$ are real and not all zero, then $$f=\sum_{k=1}^{n} c_k P_k^{\alpha_k}$$
has no more than $4n^3d+n$ distinct roots on $I$.

\end{theorem}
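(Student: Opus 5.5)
The approach is the classical Khovanskii--Rolle (Descartes-type) induction on $n$, carried out with an ODE/Wronskian flavour. Set $f_k = P_k^{\alpha_k}$. Each $f_k$ satisfies the first-order linear equation
\[
P_k f_k' - \alpha_k P_k' f_k = 0,
\]
that is, $f_k'/f_k = \alpha_k P_k'/P_k$ is a rational function of degree at most $d$. The plan is to remove the terms one at a time. Divide $f$ by $f_n$ and differentiate: by Rolle, the number of zeros of $f$ on $I$ is at most one plus the number of zeros of $(f/f_n)'$. Then
\[
\Bigl(\frac{f}{f_n}\Bigr)' = \sum_{k<n} c_k \frac{f_k}{f_n}\Bigl(\frac{\alpha_k P_k'}{P_k}-\frac{\alpha_n P_n'}{P_n}\Bigr).
\]
Multiplying by $f_n P_1\cdots P_n$, which is positive on $I$, gives a sum $\sum_{k<n} Q_k f_k$ with new polynomial coefficients $Q_k$ of degree at most $nd$. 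So after one step we have a combination of $n-1$ radicals, but with polynomial rather than constant coefficients.

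The induction must therefore be set up for generalized functions $g=\sum_{k=1}^m R_k f_k$ with polynomial coefficients $R_k$. In this class the step is as follows. Divide by $f_m$ to get $R_m + \sum_{k<m} R_k f_k/f_m$. Differentiate $\deg R_m + 1$ times to kill $R_m$; a cleaner route is to divide by $R_m f_m$ and differentiate once. With the second route, Rolle costs one zero plus the zeros of $R_m$ on $I$, since $R_m$ may vanish and we then work on subintervals. The derivative is
\[
\sum_{k<m}\Bigl(\frac{R_k f_k}{R_m f_m}\Bigr)'.
\]
After clearing the positive denominator $f_m R_m^2 \prod_k P_k$, this becomes $\sum_{k<m}\tilde R_k f_k$ with
\[
\tilde R_k = \bigl(R_k' R_m - R_k R_m'\bigr)\prod_j P_j + R_k R_m\Bigl(\alpha_k P_k' \prod_{j\ne k} P_j-\alpha_m P_m'\prod_{j\neq m} P_j\Bigr)
\]
(only $P_k$ and $P_m$ are actually needed in the product). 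Hence $\deg \tilde R_k \le \deg R_k+\deg R_m + 2d$. Writing $D_m$ for a common bound on the coefficient degrees at stage $m$, the recursion is $D_{m-1}\le 2D_m + 2d$, which grows exponentially.

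This exponential growth is the main obstacle, and it has to be avoided to reach the bound $4n^3 d+n$. The fix I would try is to keep the common denominator structure explicit. At every stage the coefficients have the form $R_k = S_k/\bigl(\prod_j P_j\bigr)^{e}$. Dividing by $f_m$ alone, never by $R_m$, and differentiating $\deg S_m+1$ times is linear in degrees, since each derivative raises the numerator degree by about $nd$. That costs many derivatives, however, so a better alternative is the Wronskian formulation. Bound the zeros of $f$ by the zeros of $W(f_1,\dots,f_n)$ and of successive Wronskians $W(f_1,\dots,f_j)$, using the classical Pólya--Szegő fact that a combination of a Chebyshev-like system has at most $\sum_j \#\{\text{zeros of } W_j\} + n-1$ zeros. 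Each $W_j(f_1,\dots,f_j)$ equals $f_1\cdots f_j$ times a determinant of rational functions, because $f_k^{(i)}/f_k$ is rational with denominator $P_k^{i}$ and numerator of degree at most $i(d-1)$ or so. Clearing denominators, $W_j$ becomes a polynomial of degree $O(j^2 d)$. It is not identically zero, by the linear independence hypothesis applied to $f_1,\dots,f_j$, which is inherited from independence of all $n$. Summing over $j\le n$ gives $O(n^3 d)$ zeros; tracking constants should give $\sum_j 2j^2 d\cdot 2 \le 4n^3 d$ plus the $n$ additive term.

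The remaining technical point is making the Wronskian–zero-counting lemma precise when intermediate Wronskians vanish on $I$. I would handle this by the standard factorization
\[
f = W_1 \frac{d}{dx}\frac{W_2}{W_1^2}\cdots
\]
applied on each subinterval between zeros of the $W_j$. Each subinterval then contributes at most $n$ zeros by Rolle, and the number of subintervals is bounded by $1+\sum_j \#Z(W_j)$. I would check that this still yields at most $4n^3d+n$ zeros rather than a multiplicative $n\cdot n^3d$. If it does not, I would refine by applying the Rolle chain directly, so that zeros of $W_j$ are only added, not multiplied.
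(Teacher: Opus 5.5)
Your ingredients match the paper's. You use the Wronskians $W_j=W(f_1,\dots,f_j)$, the fact that $\prod_{i\le j}P_i^{\,j-1}\,W_j/(f_1\cdots f_j)$ is a polynomial of degree at most $j^2d$, and $W_j\not\equiv 0$ by linear independence and analyticity. The gap is the step that turns these counts into a bound on the zeros of $f$.

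The inequality you attribute to Pólya--Szegő, $\#Z(f)\le n-1+\sum_j\#Z(W_j)$, is not the classical statement. Pólya's theorem assumes that all $W_j$ are nonvanishing on the interval, and the vanishing case is exactly what has to be proved here. Your proposed justification applies the nonvanishing case on each subinterval between zeros of the $W_j$. That gives up to $n-1$ zeros on each of $1+\sum_j\#Z(W_j)$ pieces, plus the zeros at the cut points. The result is a bound of order $n\sum_j j^2d\sim n^4d$, not $4n^3d+n$. You foresee this and fall back on ``applying the Rolle chain directly'', but that chain is the actual argument and you do not supply it. It is also not automatic: a naive recursion (divide by the first function, differentiate, recurse on $W(f_1,f_j)$) re-counts the zeros of lower Wronskians at every later level, so it is again multiplicative. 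Separately, the elimination scheme you set aside is not linear in degrees either. Killing $S_m$ takes $\deg S_m+1$ derivatives, each raising all numerator degrees by order $d$, so the degrees get multiplied at every stage.

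What makes the losses additive in the paper is the Frobenius factorization, used globally on $I$:
\[
\partial\,\frac{W_{n-1}^2}{W_nW_{n-2}}\,\partial\cdots\partial\,\frac{W_1^2}{W_2W_0}\,\partial\,\frac{W_0}{W_1}\,f=0,\qquad W_0=1 .
\]
Read it from the right, keeping a lower bound on zeros and an upper bound on poles.

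\begin{itemize}
\item Multiplication by $W_{k-1}^2/(W_kW_{k-2})$ destroys at most $\#Z(W_kW_{k-2})\le 2k^2d$ zeros.
\item Differentiating a nonzero function with $A$ poles and $N>A+1$ zeros gives a nonzero function with at least $N-A-1$ zeros, by Rolle on each of the $A+1$ pieces.
\item The poles do not accumulate. After the $k$-th multiplication the function equals $W(f_1,\dots,f_{k-1},f)/W_k$, so all its poles lie in $Z(W_k)$.
\end{itemize}

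Hence step $k$ costs, crudely, at most $4k^2d+1$ zeros. If $f$ had more than $\sum_{k\le n}(4k^2d+1)\le 4n^3d+n$ zeros, the left-hand side above would be a nonzero function, which is a contradiction. This additive chain is what your proposal needs in place of the unproved ``fact'' and the subinterval decomposition.
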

 
We would like to emphasize that the latter bound for the number of real roots does not depend on the choice of $\alpha_k$, which is in line with the philosophy of the fewnomial theory \cite{K}.

Assuming that the polynomials $P_k$ are generic and that none of $\alpha_k$ is an integer, we claim that the functions $P_k^{\alpha_k}$ are linearly independent. This can be seen by considering the holomorphic extension of $P_k^{\alpha_k}$:
in the generic case the $P_k$ have different roots and $P_1^{\alpha_1}(z)$ will have branching singularities only at zeroes of $P_1$, while linear combinations of $P_k^{\alpha_k}(z)$, $k=2,\dots, n$, will not have branching singularities at zeroes of $P_1$ and therefore $P_1^{\alpha_1}$ cannot be written as a linear combination of $P_k^{\alpha_k}$, $k>1$.

\section*{Acknowledgements.}
 This work was completed during the time A.L. served as Packard Fellow. A. L. is grateful for the hospitality of the School of Mathematical Sciences at Tel Aviv University, where A. L. was a visitor in January 2026 and presented the proof of these results at the analysis seminar.

D.Z. was supported by the Simons Dissertation Fellowship.

This work was partially done while D.N  and G.B. were at the Institute for Advanced Study in Princeton, and they would like to thank the institute for its hospitality and for providing excellent working conditions. G.B. was supported by the Marvin V. and Beverly J. Mielke Endowed Fund and the Infosys Member Fund, and D.N. was supported by the Kovner Member Fund. G.B. and A.K. were also supported by the European Union (ERC, SharpOS, 101087910) and by the Israel Science Foundation (grant No. 2067/23). D.N. was also supported by the Israel Science Foundation grant 1167/17 and by Minerva grant 714141.

Declaration of Generative AI in Scientific Writing: During the preparation of this manuscript, the authors utilized AI tools strictly for editorial support (correcting misprints, language polishing, and finding references). All content, proofs, and mathematical arguments remain entirely the authors' own responsibility.

\section{Proof of Theorem \ref{Main 3} via ODE}

\textbf{Claim.} If a function $f=P^{\alpha}$, where $P$ is a non-zero polynomial of degree $d$, then
$f^{(k)}/f$ is a rational function and $f^{(k)}/f=\frac{Q}{P^k}$, where $Q$ has degree at most $k(d-1)$.

The latter statement can be proved by induction on $k$. For $k=1$ we have $f'/f= \frac{\alpha P'}{P}$.
The induction step from $k$ to $k+1$ can be verified in a straightforward manner: if $f^{(k)}=f \frac{Q}{P^k}= P^\alpha \frac{Q}{P^k},$ then $$ \quad f^{(k+1)}= (f^{(k)})'= (P^\alpha)' \frac{Q}{P^k}+ P^\alpha \left(\frac{Q}{P^k}\right)'= P^\alpha \frac{\alpha P'Q+Q'P-kP'Q}{P^{k+1}}= f \frac{(\alpha-k)P'Q+Q'P}{P^{k+1}},$$
and the degree of the numerator $(\alpha-k)P'Q+Q'P$ is at most $(k+1)(d-1)$.

Denote $P_k^{\alpha_k}$ by $f_k$. Put $W_0=1$ and consider the Wronskian of $f_1,\dots,f_k$:
\begin{equation}
    W_k = \begin{vmatrix}
f_1 & f_2 & \cdots & f_k \\
f_1' & f_2' & \cdots & f_k' \\
f_1'' & f_2'' & \cdots & f_k'' \\
\vdots & \vdots & \ddots & \vdots \\
f_1^{(k-1)} & f_2^{(k-1)} & \cdots & f_k^{(k-1)}
\end{vmatrix}
\end{equation}

We know that $W_k$ is not identically zero on $I$ because $P_k^{\alpha_k}$ are real-analytic and linearly independent.

\textbf{Claim.} The Wronskian $W_k$ has no more than $k^2d$ roots on $I$.

To prove the claim we write 
$W_k$ as

\begin{equation}
 \begin{vmatrix}
f_1 & f_2 & \cdots & f_k \\
f_1' & f_2' & \cdots & f_k' \\
f_1'' & f_2'' & \cdots & f_k'' \\
\vdots & \vdots & \ddots & \vdots \\
f_1^{(k-1)} & f_2^{(k-1)} & \cdots & f_k^{(k-1)}
\end{vmatrix} =  \begin{vmatrix}
1 & 1 & \cdots & 1 \\
f_1'/f_1 & f_2'/f_2 & \cdots & f_k'/f_k \\
f_1''/f_1 & f_2''/f_2 & \cdots & f_k''/f_k \\
\vdots & \vdots & \ddots & \vdots \\
f_1^{(k-1)}/f_1 & f_2^{(k-1)}/f_2 & \cdots & f_k^{(k-1)}/f_k
\end{vmatrix} \cdot \prod f_i. 
\end{equation}
We can divide the Wronskian by $\prod f_i$, which do not have roots on $I$, and then multiply the columns of $W_k$ by $[P_j]^{k-1}$ to conclude that $$\tilde{W_k}= \frac{W_k}{\prod f_i} \prod_{j=1}^{k} [P_j]^{k-1} $$ is a determinant of a matrix, whose entries are polynomials with degree at most $(k-1)d$. Hence $\tilde{W_k}$ has degree at most $k^2d$ and $W_k$ has no more than $k^2d$ roots on $I$. In the estimate of the number of zeroes of $W_k$ we intentionally wrote crude (non-sharp) bounds to keep the formulas shorter.

Now, consider any non-zero linear combination $f=\sum_{k=1}^{n} c_k f_k$.
We would like to state the Frobenius formula: \(f \) satisfies an ODE:
\begin{equation}
\frac{\partial}{\partial x}\frac{W_{n-1}^2}{W_n W_{n-2}}\frac{\partial}{\partial x} \cdots \frac{\partial}{\partial x}  \frac{W_{1}^2}{W_2 W_{0}}  \frac{\partial}{\partial x} \frac{W_{0}}{W_{1}}  f = 0.
\end{equation}
We refer to \cite{Ke} and \cite{NY} for a short proof of the Frobenius formula, which is usually stated in the context of solutions to ODEs, but this reference comes with a note that the Wronskian equation $W(f,f_1,\dots,f_n)=0$ is an ODE for $f$ with solutions $f_1,\dots f_n$.

Armed with the  Frobenius formula, we are ready to finish the proof. For brevity, by the \emph{poles} of a function $g$ on $I$ we mean the points of $I$ where $g$ is not defined; outside its poles, $g$ is assumed to be real-analytic. By the \emph{zeroes} of $g$ we mean isolated zeroes, i.e., points where $g$ vanishes but is nonzero in a punctured neighborhood of each such point.
If we take a function $g$, which has a finite number of poles on $I$, and if we perform two operations: multiplication by $\frac{W_{k-1}^2}{W_k W_{k-2}}$ and differentiation, then we can control the number of new poles from above and the number of zeroes from below. 
Multiplication by $\frac{W_{k-1}^2}{W_k W_{k-2}}$ can create at most $2k^2d$ new poles at the zeroes of $W_k W_{k-2}$ and it does not decrease the number of zeroes of $g$ by more than $2k^2d$ (the decrease happens when the new pole coincides with the old zero). If a non-zero function $g$ has at most $A$ poles on $I$ and at least $N>A+1$ zeroes on $I$, then the derivative of $g$ is a non-zero function and $g'$ has at least $N-A-1$ zeroes by Rolle's theorem.  Iterating the estimates of the last three sentences in a crude way we conclude
that $f$ cannot have more than $4n^3d+n$ zeroes; otherwise, the LHS of the Frobenius ODE for $f$ would not be zero.  
\section{What was really used in the proof with applications to exponential sums and Gaussian mixtures.}
A curious reader looking at the proof of Theorem \ref{Main 3} may wonder what was really used about the functions $f_1,f_2\dots f_n$ to guarantee a bound on the number of zeroes of their non-zero linear combination $f$. The number of zeroes of $f$ is controlled by the number of zeroes of the Wronskians $W_k$. Assuming that all $f_k$ are positive on the considered interval, the modified Wronskian $$\widetilde W_k= \frac{W_k}{f_1 \dots f_k}=\begin{vmatrix}
1 & 1 & \cdots & 1 \\
f_1'/f_1 & f_2'/f_2 & \cdots & f_k'/f_k \\
f_1''/f_1 & f_2''/f_2 & \cdots & f_k''/f_k \\
\vdots & \vdots & \ddots & \vdots \\
f_1^{(k-1)}/f_1 & f_2^{(k-1)}/f_2 & \cdots & f_k^{(k-1)}/f_k
\end{vmatrix}$$ has the same number of zeroes as $W_k$, but sometimes it happens to be a rational function. And the degrees of the rational functions $W_k$ give a bound on the number of zeroes of $f$.

We would like to mention one more situation when $\widetilde W_k$ appears to be a rational function. 
Let $f_k=P_k\exp(Q_k)$, where $P_k$ and $Q_k$ are polynomials, and let $f$ be a linear combination of $f_1,f_2\dots f_n$, which is not identically zero. Then $\widetilde W_k$ is also a rational function. Indeed, if  a function $f$ has a form  $P\exp(Q)$, then the $l$-th derivative of $f$ also has the form $\tilde P \exp(Q)$ for some non-zero polynomial $\tilde P$, but with the same $Q$. Hence all of the entries of $\widetilde W_n$, which are $f^{(a)}_b/f_b$, appear to be rational functions.

One dimensional case of the problem of estimating the number of modes of Gaussian mixtures can be attacked by this method. Namely, if $$f=\sum_{k=1}^{n} c_k e^{a_k(x-b_k)^2}, \quad a_k,b_k,\in \mathbb{R}, \quad c_k >0, $$
then the number of critical points of $f$ is no more than $Cn^3$ for some numerical $C$. We leave this statement without proof, which is a minor modification of the proof of Theorem \ref{Main 3}, but the idea was explained above. We refer to \cite{AEH} for the review of the problem of estimating the number of modes of Gaussian mixtures, which has a similar flavor to Maxwell's problem.  

 A particularly curious reader, eager to test the limits of the method of combining Rolle's theorem with Frobenious formula, might naturally consider a linear combination of double exponentials of polynomials: $$f(x)=\sum_{k=1}^{n} c_k \exp(\exp(P_k(x)),$$
and attempt to estimate its number of real zeros -- assuming that f is not identically zero.
Lest anyone mistake this for a routine exercise in estimating the degree of a rational function—particularly since $\widetilde{W}_k$ has the decency not to be rational in this case, but one may resort to the kettle method of reducing the unsolved case to a solved one.

\section{Reduction of Theorem \ref{Main 1} to the case $\alpha=-3/2$ of Theorem \ref{Main 2}}

Suppose that the function $f(x)=\sum_{k=1}^{n}c_{k}\sqrt{(x-a_{k})^{2}+b_{k}^{2}}$ has at least $2n+1$ distinct real roots. By Rolle's Theorem,  its second derivative $f''(x)$ must have at least $2n-1$ roots. A direct calculation of the second derivative yields:
\[
f''(x)=\sum_{k=1}^{n}c_{k}b_{k}^{2}\frac{1}{[(x-a_{k})^{2}+b_{k}^{2}]^{\frac{3}{2}}}
\]
This expression is a function of the type described in Theorem 2, with new coefficients $c_k' = c_k b_k^2$ and $\alpha = -3/2$. Thus  $f''(x)$ can have at most $2(n-1)$ roots. Arguing by contradiction we conclude that $f$ cannot have more than $2n$ roots.

\section{Proof of Theorem \ref{Main 2} for the case of negative half integers via maximum principle for harmonic functions.} \label{Sec: half integeres}
The method of the proof suggested in this section works only for the cases where the exponent $\alpha$ is a negative half-integer, such as $\alpha = -1/2, -1, -3/2,$ etc. 
This method is based on the fact that the function $|x|^{-(d-2)}$ is harmonic in $\R^d \setminus \{0\}$, $d\geq 3$,
and harmonic functions satisfy the maximum principle.
We will explain only the case $\alpha = -1/2$ via harmonic functions in $\mathbb{R}^3$ as the rest of the half-integer cases are proved in a similar fashion. A different proof for real $\alpha<-1/2$ is outlined in the next section.
\subsection{Axially symmetric harmonic functions and the number of nodal domains.}

\medskip

Given
\[
  f(x)=\sum_{i=1}^n c_i\Bigl((x-a_i)^2+b_i^2\Bigr)^{-1/2}
\]
and $\theta \in [0,2\pi)$ consider the function
\[
  u_\theta(x,y,z)=\sum_{i=1}^n \frac{c_i}{\sqrt{(x-a_i)^2+(y-b_i\cos \theta)^2+(z-b_i\sin \theta)^2}}.
\]
Then $u_\theta$ is harmonic in $\R^3\setminus\{(a_i,b_i\cos \theta,b_i\sin \theta)\}_{i=1}^n$, and
$u_\theta(x,0,0)=f(x)$.

Define the axially symmetric harmonic function
\[
  u(x,y,z)=\frac{1}{2\pi}\int_{0}^{2\pi} u_\theta(x,y,z)\,d\theta,
\]
which is harmonic on $\R^3\setminus\bigcup_{i=1}^n S_i$, where
\[
  S_i=\{(a_i,\,b_i\cos\theta,\,b_i\sin\theta):\ \theta\in[0,2\pi]\}.
\]
In other words, the function $u=\mu * \frac{1}{|x|}$ is a Newtonian potential in $\mathbb{R}^3$ of a signed measure $\mu$ supported on the union of $S_i$,
whose restriction on each $S_i$ is uniform on $S_i$. It is not difficult to check that the Newtonian potential of a length measure on a circle tends to infinity near the circle (with logarithmic speed) and therefore
\begin{align*}
  u(p)&\longrightarrow +\infty \ \text{as } p\to S_i \text{ if } c_i>0,\\
  u(p)&\longrightarrow -\infty \ \text{as } p\to S_i \text{ if } c_i<0.
\end{align*}

In this section by nodal domains of $u$ we will call the connected components of the complement of the zero set of $u$ in $\mathbb{R}^3$. 
The function $u$ does not change sign in each of the nodal domains and we allow the nodal domains to contain the circles, where $u$ is $+\infty$ or $-\infty$.
Note that $u(x,y,z)\longrightarrow 0 \quad \text{as } x^2+y^2+z^2\to+\infty$.
Together with the maximum principle for harmonic functions this implies that each nodal domain of $u$
must contain at least one of the circles $S_i$ and $S_i$ do not intersect the boundaries of nodal domains. We conclude that the number of nodal domains
of $u$ is at most $n$. 

\medskip

Because $u$ is axially symmetric, we can write $u(x,y,z)=h(x,r)$ where $r:=\sqrt{y^2+z^2}$. The number of nodal domains of $h$ in $\mathbb{R}^2_+=\{(x,r): r>0 \}$ is the same as the number of nodal domains of $u$ in $\mathbb{R}^3$, which is not greater than $n$, see Figure \ref{fig:zero set}.

\begin{figure}[h!]
    \centering
    \includegraphics[width=0.8\textwidth]{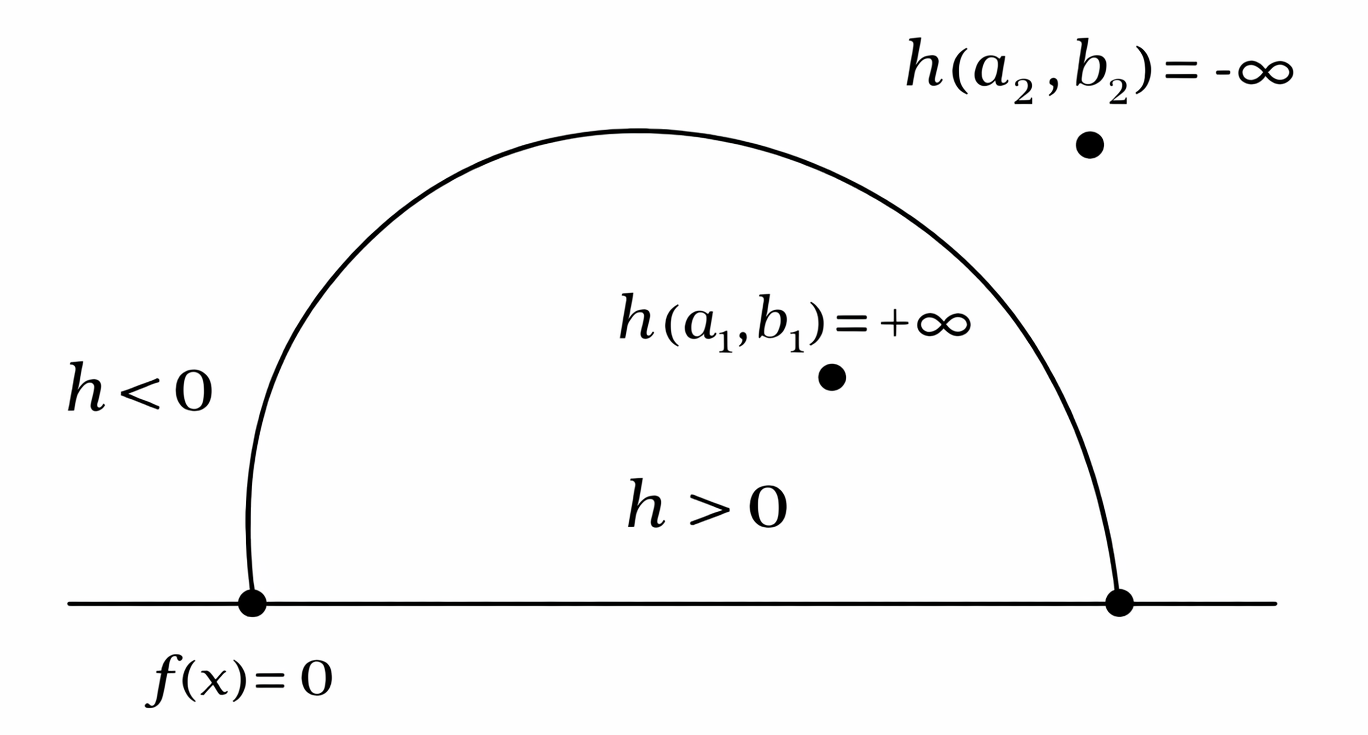}
    \caption{Zero set of $h$ with two nodal domains.}
    \label{fig:zero set}
\end{figure}

By the real-analyticity of $u$ the function $h$ has an even (in $r$) real-analytic extension  across the line $r=0$: $$h(x,r)=h(x,-r),$$ and it satisfies
\[
  h(x,0)=u(x,0,0)=f(x),\qquad x\in\R.
\]
So it is sufficient to estimate the number of zeroes of $h$ on the axis $\{r=0\}$.

\subsection{Generic case.}
To explain the intuition of the proof, we will first make the generic assumption that the zero set of $u$ has no points $p$ such that $u(p)=|\nabla u (p)|=0$. Such points are called singular points of the zero set. In this case the zero set of $h$ in $\mathbb{R}^2_+$ is a union of smooth curves. The number of zero curves of $h$ cannot be more than $n-1$, otherwise the zero curves of $h$ would separate $\mathbb{R}^2_+$ into at least $n+1$ connected components, while we know that the number of nodal domains of $h$ is at most $n$.
 Because of the real-analyticity, each zero curve of $h$ in $\R^2_{+}$ has at most 2 limit points on the line $\{(x,0)\}$.

By assumption, if $u(x,0,0)=0$, then $\nabla u(x,0,0)$ is not zero and by the axial symmetry of $u$ the zero set of $u$ is a surface perpendicular at $x$ to the line $\{(t,0,0)\}$.
Hence, any zero point of $h$ on $\{(x,0)\}$ is a limit point of a zero curve of $h$ in $\R^2_+$. Thus in the generic case the number of zeroes of $h$ is bounded by $2(n-1)$. The following topological fact was used and will be used again without a proof.

\subsection{Topological fact.}
Let $\mathbb{H}$ be the upper half-plane (or, alternatively, let $\mathbb{D}$ be the unit disc).
Suppose $\mathbb{H}$ (respectively $\mathbb{D}$) is partitioned by a collection $\Gamma$ of $C^\infty$-smooth disjoint curves into
finitely many connected components.  Each curve in $\Gamma$ is either a simple closed loop contained in $\mathbb{H}$,
    or a smooth embedding of an open interval whose two ends both tend to the boundary
    $\partial\mathbb{H}$ (respectively $\partial \mathbb{D}$);
 distinct curves in $\Gamma$ have no common points in $\mathbb{H}$ (respectively in $\mathbb{D}$),
    although they may have common limit points on the boundary $\partial\mathbb{H}$
    (respectively $\partial \mathbb{D}$).

Let $N$ be the number of connected components of the complement of 
$
\mathbb{H} \setminus \bigcup_{\gamma \in \Gamma} \gamma
\quad
\text{(respectively } 
\mathbb{D} \setminus \bigcup_{\gamma \in \Gamma} \gamma\text{).}
$
Then the number of curves in $\Gamma$ is finite and satisfies
$\#\Gamma \leq N - 1.$

\subsection{Equation for axially symmetric harmonic functions.} 

The harmonicity of the axially symmetric function $u$ in $\mathbb{R}^3\setminus\{(x_1,0,0)\}$ can be expressed in cylindrical coordinates $(x,r)$, where $r=\sqrt{y^2+z^2}$. Namely, the function $h(x,r)=u(x,y,z)$ satisfies the Darboux equation (sometimes also called the elliptic Euler–Poisson–Darboux equation):

\begin{equation}\label{eq:hPDE}
  h_{xx}+h_{rr}+\frac{1}{r}\,h_r=0.
\end{equation}
Similarly, an axially symmetric function $u(x_1,x_2,\dots, x_n)=h(x_1, \sqrt{x_2^2+\dots+ x_n^2})$ is harmonic in $\mathbb{R}^n\setminus\{(x_1,0,0 \dots):x_1\in\mathbb{R}\}$ if and only if 
 \begin{equation}
  h_{xx}+h_{rr}+\frac{n-2}{r}\,h_r=0
  \qquad \text{in }   \qquad \R^2_{+}.
\end{equation}

\subsection{Bers' theorem on equiangular intersection.}
Given a $C^\infty$-smooth function $u$ in a planar domain $\Omega\subset\mathbb{R}^2$, the singular set of the zero set of $u$ is a set of points where $u$ and $\nabla u$ are simultaneously zero. Outside the singular set the zero set of $u$ is a union of smooth curves by implicit function theorem. Bers' equiangular theorem explains how the zero sets of two-dimensional solutions to elliptic partial differential equations look like at singular points. We will formulate the theorem only in the real-analytic case, where the proof is easier and sketched below.

\textbf{Bers' equiangular theorem}(folklore). Let $u$ be a real solution to
$$\Delta u + Au_x+Bu_y+Cu =0$$
in a planar domain $\Omega\subset\mathbb{R}^2$, where the
functions $A,B,C$ are real-analytic in $\Omega\subset\mathbb{R}^2$. 
Assume that $0\in \Omega$ is a singular point of the zero set of $u$ and
the vanishing order of $u$ at $0$ is $k\geq 2$. Then, in a punctured neighborhood of $0$, the zero set is a union of $2k$ smooth curves, which  meet at $0$ in an equiangular way so that the curves have tangential directions at $0$ and the angles between adjacent curves are $\pi/k$.

\begin{remark}
 One can extend the zero curves of $u$ through $0$ in a $C^1$-smooth way. In this sense the zero set of $u$ is an equiangular intersection of $k$ $C^1$-smooth curves.
In particular, if the vanishing order of $u$ at some point $p\in \Omega$ is 2, then there will be two zero curves intersecting orthogonally at $p$.
    
\end{remark}
\begin{proof} The solutions to linear elliptic PDE with real-analytic coefficients are real-analytic (see \cite{Hormander}, pages 178-180). So we can expand 
$u$ into Taylor series at $0$, which converges absolutely in some neighborhood of $0$:
$$ u(x,y)= p_k(x,y) + \sum_{|\alpha|>k} \frac{\partial^\alpha u(0)}{\alpha!}x^{\alpha_1}y^{\alpha_2},$$
where $p_k(x,y)$ is a homogeneous polynomial of degree $k$.
Using real-analyticity, we can plug the Taylor expansion into the equation $\Delta u + Au_x+Bu_y+Cu =0$ to see that
$$\Delta p_k(z)= O(|z|^{k-1}), $$
where we identify complex numbers $z=x+iy$ with points in $\mathbb{R}^2$.
However, $\Delta p_k(z)$ is homogeneous of degree $k-2$, which implies that $\Delta p_k=0$. The space of real homogeneous harmonic polynomials of degree $k$ is two-dimensional and is spanned by the real and imaginary parts of $z^k$.
In polar coordinates, 
$$p_k(r,\varphi)= c_0 r^k \sin(k(\varphi-\varphi_0))$$
for some $c_0>0$ and $\varphi_0 \in [0,2\pi)$.
The gradient of $p_k$ vanishes only at $0$ and the Taylor expansion yields
$$ \nabla u = \nabla p_k + o(|\nabla p_k|).$$
This approximation for $u$ and its gradient implies that the zero curves of $u$ are asymptotically tangent at $0$ to the rays $\{\varphi=\varphi_0+j\pi/k\}$, $j=0,1,\dots, 2k-1$, which form the zero set of $p_k$.
    
\end{proof}
\begin{remark} \label{Rem: Bers'} If a function $u$ is $C^1$-smooth in a punctured neighborhood of zero
and $$u(z)=\Re(c_0z^{-k}), \quad \nabla u(z)=\nabla\Re( c_0z^{-k})+o(|z|^{-k-1}),$$ 
where $k$ is an integer, $c_0 \in \mathbb{C}\setminus\{0\}$, then
the zero set of $u$ is formed by $2k$ curves ending at $0$ such that the angles between adjacent curves are $\pi/k$. This remark is not used in the proof of Theorem \ref{Main 2}, but it can be used to extend the Bers' equiangular theorem to the case of poles of real solutions to $\Delta u + \lambda u=0$
on a punctured hyperbolic plane.
\end{remark}

\subsection{General case.}

We proceed to prove the bound for the number of zero points in the general case. We may assume that the zero set of $u$ on the $x$-axis is locally finite. Since the zero set of $u$ does not contain isolated points, any zero point of $h(x,0)$ is a limit point 
of the zero set of $h$ in $\R^2_{+}$.

In view of the equiangular intersection property of the zero set, see Figure \ref{fig:perturbation},
one can infinitesimally perturb the zero set of $h$ in $\R^2_+$ near the singular points in such a way that the set becomes a disjoint union of smooth curves in $\R^2_{+}$, the number of nodal domains does not increase and the set of limit points on $\{(x,0)\}$ remains the same. Hence the infinitesimal perturbation of the zero set of $h$ has no more than $(n-1)$ smooth curves and therefore the zero set of $h$ in $\R^2_{+}$ is a union of at most $n-1$ piecewise smooth curves, which intersect only at singular points. Since $h$ is real-analytic every zero curve of $h$ can't have more than $2$ limit points on the line $\{(x,0)\} $, and we conclude that the total number of zeroes is bounded by $2(n-1)$.

\begin{figure}[h!]
    \centering
    \includegraphics[width=0.8\textwidth]{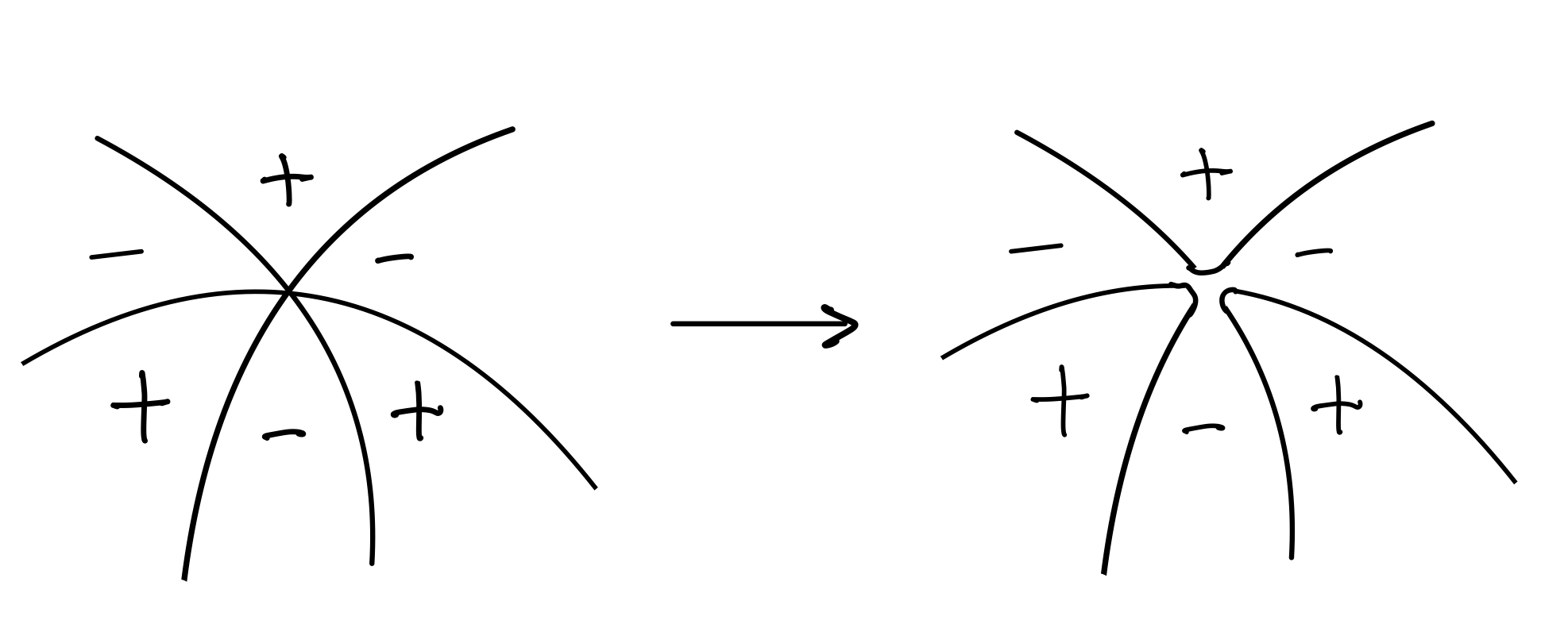}
    \caption{Infinitesimal perturbation of equiangular intersection that does not increase the number of nodal domains}
    \label{fig:perturbation}
\end{figure}

\section{Proof of Theorem \ref{Main 2} via Nodal Geometry on the hyperbolic plane} \label{Sec:Proof}
We recommend \cite{LMP} as an introduction to spectral geometry. In this section we present the proof of Theorem~\ref{Main 2} using fundamental solutions for
$\Delta + \lambda$ on the Lobachevsky plane. At first glance, these objects may appear unrelated to the original problem. For readers interested in understanding this choice, we show in the appendix how the radial fundamental solution of
$\Delta u + \frac{1}{4}u = \delta_0$ is connected to the Newtonian potential in $\mathbb{R}^3$ generated by a uniform charge distributed on a circular wire. 

\subsection*{Radial Fundamental solutions to $\Delta + \lambda $ on $\mathbb{H}$ for $\lambda <1/4$.}

We will use two standard models of the Lobachevsky (hyperbolic) plane:

\begin{itemize}
    \item \textbf{Upper half–plane model.}
    Consider the upper half–plane
    \[
    \mathbb H = \{(x,y)\in\mathbb R^2 : y>0\}
    \]
    equipped with the Poincaré metric
    \[
    ds^2 = \frac{dx^2 + dy^2}{y^2},
    \]
    and the corresponding Laplace–Beltrami operator
    \[
    \Delta_{\mathbb H} u
      = y^{2}\bigl(\partial_x^{2} + \partial_y^{2}\bigr)u.
    \]

    \item \textbf{Disc model.}
    Consider the unit disc
    \[
    \mathbb{D} = \{z = x + iy \in \mathbb{C} : |z|<1\},
    \]
    endowed with the Poincaré metric
    \[
    ds^2 = \frac{4\,|dz|^2}{(1 - |z|^2)^2}.
    \]
    The Laplace–Beltrami operator on $\mathbb{D}$ is given by
    \[
    \Delta_{\mathbb{D}} u
      = \frac{(1 - |z|^2)^2}{4}\,\Delta_{\mathbb{R}^2} u,
    \]
    where $\Delta_{\mathbb{R}^2} = \partial_x^2 + \partial_y^2$ is the Euclidean Laplacian.
\end{itemize}

We will call a function $u$ radial on $\mathbb{D}$ with respect to point $z_0$ (in the Poincare metric) if
$u(z)=u(d_\mathbb{D}(z,z_0))$ is a function of distance to $z_0$ in Poincare metric. Unless specified otherwise, radial means radial with respect to $0$.
For any real constant $\lambda$ the radial solutions to $\Delta u + \lambda u = 0$ in $\mathbb{D}\setminus{\{0\}}$ satisfy an ODE
\[
u_{rr} + \frac{1}{r}u_{r} + \frac{4\lambda}{(1-r^2)^2}u = 0,
\quad 0<r<1.
\]
We restrict attention to the case $\lambda < \frac{1}{4}$, for which there exist two positive, linearly independent radial solutions. Although there are many explicit formulas for these solutions in terms of hypergeometric functions, Legendre functions, heat kernels, and related special functions, the only properties we will use are their asymptotic behaviors as $|z| \to 1^-$ and $|z| \to 0^+$. These asymptotics are used in the proof of Theorem~\ref{Main 2}. Rather than referring to external sources, we include explicit formulas for the radial solutions and derive the relevant asymptotics in the appendix. 
\begin{lemma} \label{Lemma:asymptotics}
Let $\beta>-\frac{1}{2}$ and let $d\mu$ be the length measure on $\partial \mathbb{D}$. Then
\begin{itemize}
    \item  \begin{equation} \label{eq: radial regular}
u(z)
:= \frac{1}{2\pi}\int_{\partial\mathbb{D}}
\left(\frac{|z-\zeta|^2}{1-|z|^2}\right)^{\beta} \, d\mu(\zeta)
\end{equation}
is the unique radial solution to $\Delta_\mathbb{D} u-\beta(\beta+1)u = 0$ in $\mathbb{D}$ such that $u(0)=1$. As $|z|\to 1-$ the function $u$ has the following asymptotic properties (for $\beta > -1/2$):
\[
u(z)
= C_1(\beta)\,(1 - |z|^2)^{-\beta}\bigl(1+o(1)\bigr),
\]
where $C_1(\beta)=\frac{1}{2\pi}\int_{\partial\mathbb{D}}
|1-\zeta|^{2\beta} \, d\mu(\zeta)$ is a positive numerical constant.
   
    \item A function 
    \begin{equation} \label{eq: radial fundamental}
    g(z)= g(|z|)= \frac{u(|z|)}{2\pi}\int
    \limits_{|z|}^{1}\frac{1}{u^2(r)r}dr
    \end{equation}
      is radial fundamental solution to
    $\Delta_\mathbb{D} g+\lambda g = -\delta_0$
    in $\mathbb{D}$ (see \eqref{eq:distributions} in Appendix for the precise meaning).  As $|z|\to +0$ $$g(z)= \frac{-\log|z|}{2\pi}\bigl(1+o(1)\bigr)$$
    and as $|z|\to 1-$ 
\[
g(z)
= C_2(\beta)\,(1 - |z|^2)^{\beta+1}\bigl(1+o(1)\bigr), \quad C_2(\beta) >0.
\]
    
\end{itemize}
\end{lemma}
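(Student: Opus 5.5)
We prove the lemma in three steps: show the kernel $K_\zeta(z)=\bigl(\frac{1-|z|^2}{|z-\zeta|^2}\bigr)^{-\beta}$ is an eigenfunction, read off the boundary asymptotics of $u$, and then treat $g$ by reduction of order. Note that $\lambda$ must be read as $-\beta(\beta+1)$, and $\lambda<1/4$ exactly when $\beta>-1/2$.

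\textbf{Step 1: the kernel is an eigenfunction.} For fixed $\zeta\in\partial\mathbb{D}$ the Poisson kernel $P_\zeta(z)=\frac{1-|z|^2}{|z-\zeta|^2}$ satisfies $\Delta_{\mathbb{D}}P_\zeta^s=s(s-1)P_\zeta^s$. The quickest check is to transport to the upper half-plane by a M\"obius map sending $\zeta$ to $\infty$. There $P_\zeta$ becomes a constant multiple of $y$, and $y^2\partial_y^2 y^s=s(s-1)y^s$.

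\textbf{Step 2: $u$ is the radial solution.} With $s=-\beta$ we get $s(s-1)=\beta(\beta+1)$, so each $K_\zeta=P_\zeta^{-\beta}$ solves $\Delta_{\mathbb{D}}K_\zeta-\beta(\beta+1)K_\zeta=0$. Averaging over $\zeta$ (differentiation under the integral is fine, since $|z-\zeta|\ge 1-|z|>0$) shows that $u$ solves the same equation. The function $u$ is radial because the rotations $z\mapsto e^{i\phi}z$ permute the $\zeta$, and $u(0)=1$ is immediate. For uniqueness, the radial ODE has a regular singular point at $r=0$ with indicial roots $0,0$. Hence one solution is smooth at $0$ and the other behaves like $\log r$, so the radial solution smooth at $0$ with value $1$ there is unique.

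\textbf{Step 3: boundary asymptotics of $u$.} Write $z=\rho\in(0,1)$. Then
\[
u=(1-\rho^2)^{-\beta}\,\frac{1}{2\pi}\int|\rho-\zeta|^{2\beta}\,d\mu .
\]
As $\rho\to1$ the integral converges to $\frac{1}{2\pi}\int|1-\zeta|^{2\beta}\,d\mu$, which is finite exactly because $2\beta>-1$. This limit holds by dominated convergence, using $|\rho-\zeta|\ge c|1-\zeta|$ for $\beta<0$ and trivial boundedness for $\beta\ge0$. The limit is $C_1(\beta)$.

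\textbf{Step 4: $g$ solves the ODE away from $0$.} By reduction of order with the Wronskian $\propto 1/r$ (the ODE has first-order coefficient $1/r$), $g$ is a radial solution on $(0,1)$. This requires $u>0$ on $[0,1)$, which holds because the integrand defining $u$ is positive.

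\textbf{Step 5: behaviour of $g$ at $0$.} Near $r=0$ we have $u\to1$, so
\[
\int_{|z|}^1\frac{dr}{u^2 r}=-\log|z|+O(1).
\]
This gives $g=\frac{-\log|z|}{2\pi}(1+o(1))$. The delta mass $-\delta_0$ with the correct normalisation then follows from the standard Green's identity on $\mathbb{D}\setminus B_\varepsilon$. In the disc metric the operator is conformally $\frac{(1-|z|^2)^2}{4}\Delta_{\mathbb{R}^2}$, and near $0$ the $\lambda g$ term is integrable. The flux of $\nabla g\sim -\frac{1}{2\pi r}$ through $|z|=\varepsilon$ is $-1$.

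\textbf{Step 6: behaviour of $g$ at the boundary, the main obstacle.} Substituting $u(r)\sim C_1(1-r^2)^{-\beta}$ gives
\[
\int_{|z|}^1\frac{dr}{u^2 r}\sim C_1^{-2}\int_{|z|}^1 (1-r^2)^{2\beta}\,dr\sim C_1^{-2}\frac{(1-|z|)^{2\beta+1}}{2\beta+1}\,2^{2\beta}.
\]
This is an asymptotic relation for integrals of positive functions whose exponent $2\beta>-1$ keeps them integrable, and it is justified by L'H\^opital's rule. Multiplying by $u$ produces $(1-|z|^2)^{\beta+1}$ times an explicit positive constant $C_2(\beta)$. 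The only care needed is tracking these constants and confirming integrability at $r=1$; the condition $\beta>-1/2$ is used precisely there.
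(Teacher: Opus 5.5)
Your proposal is correct and follows essentially the paper's proof. The shared steps are: the kernel $P_\zeta^{-\beta}$ pulled back from $y^{-\beta}$ on $\Hh$, rotational averaging, dominated convergence for $u$ near $\partial\mathbb{D}$, reduction of order for $g$, Green's identity on $\mathbb{D}\setminus B_\varepsilon$ for the delta mass, and integration of $(1-r^2)^{2\beta}$ for $g$ near the boundary, which gives $C_2(\beta)=\frac{1}{4\pi(2\beta+1)C_1(\beta)}$. The only deviation is uniqueness: you use the double indicial root $0$ of the radial ODE at $r=0$, whereas the paper applies Bers' equiangular theorem to a radial solution vanishing at $0$. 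Both arguments work, and yours is the more standard ODE one.
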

The proof of the lemma is given in Appendix. The representation \eqref{eq: radial regular} is true for $\beta=-1/2$, but  $u(|z|)$ does not behave like $(1-|z|^2)^{1/2}$ as $z\to 1-$.  Instead of \eqref{eq: radial fundamental} we provide a modified formula for fundamental solution
for $\Delta_{\mathbb{D}} u+\frac{1}{4}u=\delta_0$ in the appendix using the Newtonian potential of a uniform
charge on a circular wire.

The next lemma provides a bound on the number of nodal domains of linear combinations of Möbius transformations of $g$, which will be used in the proof of Theorem~\ref{Main 2}. A reader not familiar with the theory of distributions may safely ignore the term “fundamental solution” and simply assume below that the function $u$ is a linear combination of Möbius transformations of $g$.

\begin{lemma} \label{Lemma:main}
Let $\beta>-1/2$ and $\lambda=-\beta(\beta+1)< \frac{1}{4}$. Let $u$ be a solution to
\[
\Delta_{\mathbb{D}} u + \lambda u=\sum_{k=1}^{n}c_{k}\delta_{p_{k}} \quad \text{ in Poincare's disc } \quad \mathbb{D},
\]
where $c_{k} \in \mathbb{R}\setminus{0}$ and $p_k$ are distinct points on $D$. Assume that $u(z) = o(1-|z|^2)^{-\beta}$ as $z \to \partial \mathbb{D}$. Then $u$ has at most $n$ nodal domains in $D$.
\end{lemma}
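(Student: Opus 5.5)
The plan is to show that every nodal domain of $u$ must contain at least one of the points $p_k$. Since near $p_k$ the function $u$ behaves like $-c_k\,g(d_{\mathbb{D}}(z,p_k))$, it tends to $\pm\infty$ logarithmically with a fixed sign (by the asymptotics of $g$ in Lemma~\ref{Lemma:asymptotics}, transported by a Möbius isometry). Hence each $p_k$ lies in the interior of exactly one nodal domain. This gives an injection from nodal domains into $\{p_1,\dots,p_n\}$, so there are at most $n$ of them. As in Section~\ref{Sec: half integeres}, the real content is a maximum principle. Here, however, the zero-order term $\lambda u$ may have the bad sign (when $0<\lambda<1/4$), so the plain maximum principle is not enough and has to be replaced by a comparison argument.

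For the comparison, let $v$ be the radial regular solution from Lemma~\ref{Lemma:asymptotics} with parameter $\beta$, i.e.\ $v(z)=\frac{1}{2\pi}\int_{\partial\mathbb{D}}\bigl(\frac{|z-\zeta|^2}{1-|z|^2}\bigr)^{\beta}d\mu(\zeta)$. It satisfies $\Delta_{\mathbb{D}}v+\lambda v=0$ because $\lambda=-\beta(\beta+1)$. It is strictly positive on $\mathbb{D}$, being an average of positive quantities. Finally, $v(z)=C_1(\beta)(1-|z|^2)^{-\beta}(1+o(1))$ as $|z|\to1$. Suppose some nodal domain $\Omega$ contains no $p_k$; without loss of generality $u>0$ on $\Omega$. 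Set $w=u/v$ on $\Omega$. A direct computation using $\Delta_{\mathbb{D}}v+\lambda v=0$ shows that $w$ satisfies
\[
\Delta_{\mathbb{R}^2} w + 2\,\frac{\nabla v}{v}\cdot\nabla w = 0 \quad\text{in } \Omega .
\]
This is an elliptic equation with smooth coefficients and no zero-order term, so the strong maximum principle applies to it.

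Next I check the boundary behaviour of $w$. On $\partial\Omega\cap\mathbb{D}$ we have $u=0$, hence $w=0$. Near $\partial\mathbb{D}$ the hypothesis $u=o\bigl((1-|z|^2)^{-\beta}\bigr)$, combined with the asymptotics of $v$, gives $w\to0$. So $w$ extends continuously by $0$ to $\overline{\Omega}$, where the closure is taken in the closed disc. Since $w>0$ in $\Omega$, its supremum is positive and is attained at an interior point of $\Omega$. The strong maximum principle then forces $w$ to be constant on $\Omega$, hence $w\equiv0$, which is a contradiction. Therefore every nodal domain contains some $p_k$, which proves the lemma.

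The step needing most care is the behaviour at the ideal boundary. One needs the decay hypothesis on $u$ to hold uniformly as $|z|\to1$, so that $w$ really tends to $0$ uniformly on the part of $\Omega$ near $\partial\mathbb{D}$. One also needs to justify that the supremum is attained even when $\Omega$ accumulates on $\partial\mathbb{D}$. I would handle this by reading the $o(\cdot)$ hypothesis as a uniform statement. Then for any $\varepsilon>0$ the set $\{w>\varepsilon\}\cap\Omega$ is compactly contained in $\mathbb{D}$ and avoids the poles, so the supremum is attained there. A minor additional point is to confirm that poles are not on nodal lines, which follows from the logarithmic blow-up of $g$.
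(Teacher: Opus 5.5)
Your proposal is correct and follows essentially the same route as the paper. You divide by the positive radial solution $v$ of Lemma~\ref{Lemma:asymptotics}, so that $w=u/v$ satisfies $\operatorname{div}(v^{2}\nabla w)=0$; your drift form $\Delta w+2\frac{\nabla v}{v}\cdot\nabla w=0$ is the same equation, written in the disc instead of $\mathbb{H}$. You then combine the decay $u/v\to 0$ at $\partial\mathbb{D}$ with the logarithmic blow-up at the $p_k$ to force every nodal domain to contain a pole. Your extra care about reading the $o(\cdot)$ hypothesis uniformly, and about the supremum being attained, fills in details the paper leaves implicit.
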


\begin{proof}
Consider the radial positive solution $v$: $\Delta v+\lambda v=0$ on $\mathbb{D}$ such that $v(z)= v(|z|)$ and $v(0)=1$. Then the ratio $h=\frac{u}{v}$ tends to zero near the boundary of $\mathbb{D}$.

 The zero set and nodal domains of $h$ and $u$ are the same.
We claim that each nodal domain of $h$ contains at least one of the points $p_k$.

 First, we note that the function $u$ (and h) tend to $+\infty$ near $p_k$ if $c_k<0$ and to $-\infty$ if $c_k>0$. This can be seen by considering the Mobius transformation $T$ such that $\Delta g(T(z))+\lambda g(T(z))= -\delta_{p_k}$. Then by elliptic regularity $u(z)+c_kg(T(z))$ is smooth near $p_k$ and therefore $u$ tends to $\pm\infty$ near $p_k$ depending on the sign of $c_k$.

If a nodal domain $\Omega$ of $h$ did not contain any of $p_k$, then 
 $|h|$ must have a local maximum in $\Omega$ because $h(z) \to 0$ as $z\to \partial \Omega \cup \partial \mathbb{D}$ in Euclidean metric. We claim that $|h|$ cannot have local maxina.

Assume the contrary and identify Poincare's disc with Lobachevsky upper half-plane $\mathbb{H}$.   In the $(x,y)$ coordinates on $\mathbb{H}=\mathbb{R}_{+}^{2}$ 
the functions $u$ and $v$ satisfy $$y^{2}\Delta v+\lambda v=0, \quad  y^{2}\Delta u+\lambda u=0$$
in $\Omega$, which yields that $h=u/v$ satisfies
\[
\text{div}(v^{2}\nabla h)=0 \quad \text{in } \Omega.
\]

By the maximum principle for elliptic equations in divergence form,
the function $h$ cannot have local maxima and minima unless it is identically constants. Contradiction is obtained and therefore each nodal domain of $u$ must contain at least one of the points $p_k$ and therefore the number of nodal domains is at most $n$.

\end{proof}

The following question is relevant to extending Theorem \ref{Main 2} to the  case $\alpha>-1/2$, which we currently don't know how to do.

\textbf{Question on the number of nodal domains for solutions to $\Delta u+\lambda u =0$}. 
Consider the radial solution to $\Delta_\mathbb{D} u-\beta(\beta+1)u = 0$ in $\mathbb{D}$ such that $u(0)=1$. Let $c_1, \dots c_n$ be non-zero real numbers and let $T_1,T_2,\dots, T_n$ be Mobius transformations of $\mathbb{D}$ such that $T_k^{-1}(0)$ are distinct points. Is it true that $v=\sum_{k=1}^{n} c_ku(T_k(z))$ has at most $Cn$ nodal domains?

Despite not being able to answer this question, we now show how to prove Theorem \ref{Main 2}  using 
Lemma \ref{Lemma:main}.
The Cayley transform
\[
\Phi(z) = \frac{z-i}{z+i}
\]
 is an isometry from $\mathbb{H}$ to $\mathbb{D}$, which sends $i$ to $0$ and satisfies
\[
1 - \bigl|\Phi(x+iy)\bigr|^2 = \frac{4y}{x^2 + (y+1)^2}.
\]
Then $g_\mathbb{H}=g(\Phi(z))$ is a fundamental solution to 
$\Delta_\mathbb{H}u+\lambda u = -\delta_{i}$, which is radial with respect to the point $i$ in the hyperbolic metric.
Using
\[
g(z) = C_2(\beta)\,\bigl(1 - |z|^2\bigr)^{\beta+1}\bigl(1+o(1)\bigr),
\quad |z|\to 1-,
\]
we obtain
\begin{equation} \label{eq: asymptotics on half-plane}
g_\mathbb{H}(x+iy)
= C_2(\beta)\left(\frac{4y}{x^2 + (y+1)^2}\right)^{\beta+1}\bigl(1+o(1)\bigr),
\qquad y\to +0.
\end{equation}
Similarly, the function $u_{\mathbb{H}}=u(\Phi(z))$
satisfies 
\begin{equation} 
u(x+iy)
= C_1(\beta)\left(\frac{4y}{x^2 + (y+1)^2}\right)^{-\beta}\bigl(1+o(1)\bigr),
\qquad y\to +0.
\end{equation}

We stress that the term $o(1)$ is uniform in $x$. For the sake of simplicity we will identify $g_\mathbb{H}$ and $g$ (respectively $u_\mathbb{H}$ and $u$) and sometimes will skip writing the index $\mathbb{H}$.

We can find Mobius transformations $T_k:\mathbb{H} \to \mathbb{H}$ sending $a_k+ib_k$ to $i$ (see Appendix for the definition of Mobius transforms) and
a linear combination $$v=\sum _{k=1}^{n}d_k g_\mathbb{H}(T_k(z))$$
 such that for $\alpha=-\beta-1$ our function
\[
f(x)=\sum_{k=1}^{n}c_{k}[(x-a_{k})^{2}+b_{k}^{2}]^{\alpha}= \sum_{k=1}^{n}c_{k}[(x-a_{k})^{2}+b_{k}^{2}]^{-\beta-1}
\]
is a normalized trace of $v$ in the sense that 
$$ v(x+iy)= y^{\beta+1}(f(x)+o(1))$$
as $y\to+0$. 

By Lemma~\ref{Lemma:main} the function $v$ has at most $n$ nodal domains.
By a limit point of the zero set of $v$ we will mean a point $\zeta \in \partial \mathbb{H}$ such that there is a sequence $z_k$ of zeroes of $v$ in $\mathbb{H}$ such that
$z_k \to \zeta$ in Euclidean metric (one can also add $\infty$ to $\partial \mathbb{H}$ and say that $\infty$ is the limit point of zero set of $v$ if there is a sequence of zeroes $z_k$  of $v$ with $|z_k|\to +\infty$).

  We assume that $f$ is not identically zero and by real-analyticity the zero set of $f$ on the $x$-axis is locally finite.
We will show that the zero set of $v$ in $\mathbb{H}$ has at most $2(n-1)$ limit points on $\partial \mathbb{H}$ and every zero of $f$ on $\mathbb{R}$ is a limit point, which will finish the proof.

We repeat the topological argument from Section \ref{Sec: half integeres}. In view of Bers' equiangular theorem, see Figure \ref{fig:perturbation},
we can infinitesimally perturb the zero set of $v$ in $\mathbb{H}$ near the singular points in such a way that the set becomes a disjoint union of smooth curves in $\mathbb{H}$, the number of nodal domains does not increase and the set of limit points remains the same. Hence the infinitesimal perturbation of the zero set of $v$ has no more than $(n-1)$ smooth curves and therefore the zero set of $v$ in $\mathbb{H}$ is a union of at most $n-1$ piece-wise smooth curves, which intersect only at singular points. 
 The set of limit points of the zero set of $v$ is discrete because it is a subset of the zeroes of real analytic $f$ and $ v(x+iy)= y^{\beta+1}(f(x)+o(1))$. By real analyticity every zero curve of $v$ can't have more than $2$ limit points on the line $\{(x,0)\} $, and we conclude that the total number of limit points is at most $2(n-1)$.

It is now sufficient to show that every zero of $f$ is a limit point of the zero set of $v$. Clearly, if $f$ changes sign at some point $p \in \mathbb{R}$, then $v$ also changes sign in any neighborhood of $p$ in the upper half-plane, and thus $p$ is a limit point. Recall that $f$ is real-analytic. Arguing by contradiction in the case when $f$ does not change sign at one of its zeros, we may assume without loss of generality that
\[
f(0) = 0,\qquad f(x) > 0 \quad \text{for } 0<|x|<\varepsilon,
\]
and also that $v(z)$ is a (strictly) positive solution to $\Delta v + \lambda v = 0$ in 
\(
\mathbb{H} \cap \{ |z|\le \varepsilon \}.
\)

We want to arrive at a contradiction with these assumptions. First note that, by the positivity of $v$ and the asymptotics of $v$ near the real line, we have
\[
v(x+iy) \ge c\, y^{\beta+1} \quad \text{for } |x+iy| \in \Bigl[\frac{\varepsilon}{4},\frac{3\varepsilon}{4}\Bigr]
\]
for some $c>0$. Let us note that $y^{-\beta}$ and $y^{\beta+1}$ are both positive solutions to
\[
\Delta_{\mathbb{H}} u - \beta(\beta+1)u = 0.
\]
Consider the ratio of two solutions to the same PDE:
\[
h = \frac{v - c y^{\beta+1}}{y^{-\beta}},
\]
which is a solution to
\[
\operatorname{div}(y^{-2\beta} \nabla h)=0
\quad \text{in } \mathbb{H}\cap\{|z|\le \varepsilon\}.
\]
Therefore $h$ cannot have local minima unless it is a constant function.  We recall that 
$$ v(x+iy)= y^{\beta+1}(f(x)+o(1))$$
as $y\to+0$ and therefore
\[
|h(x+iy)| \le C\, y^{2\beta+1}
\quad \text{in } \mathbb{H}\cap\Bigl\{|x+iy|\le \frac{3\varepsilon}{4}\Bigr\},
\]
which yields $h(x+iy) \to 0 \quad \text{as } y\to 0.$
Also, $h \geq 0$ for 
\[
|x+iy|\in \Bigl[\frac{\varepsilon}{4},\frac{3\varepsilon}{4}\Bigr].
\]
By the minimum principle this implies that $h\ge 0$ in
\(
\mathbb{H}\cap\Bigl\{|x+iy|\le \frac{3\varepsilon}{4}\Bigr\},
\)
and therefore
\[
v(x+iy) \ge c\, y^{\beta+1}
\quad \text{in } \mathbb{H}\cap\Bigl\{|x+iy|\le \frac{3\varepsilon}{4}\Bigr\}.
\]
This contradicts the asymptotics of $v$ at $0$, since $f(0)=0$.

\section{Appendix}

\subsection*{Axially symmetric harmonic functions in Euclidean space and Laplace-Beltrami eigenfunctions on Lobachevsky upper half--plane}

This subsection is not formally needed for the proof of Theorem \ref{Main 2}. 
The purpose of this section is rather
to explain a connection between axially symmetric harmonic functions and  solutions to
\(\Delta u + \lambda u =0 \) on the Lobachevsky plane. This connection creates a link between two different proofs and, as byproduct, yields a curious transformation of axially symmetric harmonic functions in
\[
\mathbb{R}^3 \setminus \{(x,0,0) : x \in \mathbb{R}\}
\]
to solutions of \(\Delta u + \frac{1}{4}u = 0\) on the Lobachevsky hyperbolic plane.

Namely, an axially symmetric function
\[
u(x_1,x_2,x_3)=h\bigl(x_1, \sqrt{x_2^2+x_3^2}\bigr)
\]
in \(\mathbb{R}^3 \setminus \{(x_1,0,0): x_1\in\mathbb{R}\}\) is harmonic outside the \(x_1\)-axis if and only if
\[
\varphi(x,y)=h(x,y)\,y^{1/2}
\]
is a solution to \(\Delta_\mathbb{H}\varphi+\frac{1}{4}\varphi=0\) on the Lobachevsky upper half-plane \(\mathbb{H}\), where
the Laplace--Beltrami operator on \(\mathbb{H}\) is defined by
\[
\Delta_\mathbb{H}u = y^2(u_{xx}+u_{yy}).
\]

Once the statement is formulated, it is straightforward to verify:
\[
\Delta_\mathbb{H}\varphi+\frac{1}{4}\varphi=0 
\iff \Delta_\mathbb{H}(h y^{1/2})+\tfrac{1}{4}h y^{1/2}=0 
\iff h_{xx}+h_{yy}+\frac{h_y}{y}=0 \iff \Delta u=0.
\]

We also remark that \(y^{1/2}\) itself solves the equation \(\Delta_\mathbb{H}\varphi+\frac{1}{4}\varphi=0\). Thus \(h\) can be viewed as the ratio of two solutions of the same eigenvalue equation, which explains the extra factor \(y^{1/2}\) in the transformation from eigenfunctions with eigenvalue $1/4$ to harmonic functions.

 Similarly, if we consider the $x_1$-axis $\{(x_1,0,\dots,0)\}$ in $\mathbb{R}^n$ and a function $u(x_1,\dots,x_n)=h(x_1,\sqrt{x_2^2+\dots+x_n^2})$, which is invariant with respect to rotations preserving the $x_1$-axis, then $u$ is harmonic in 
 $\mathbb{R}^n\setminus \{(x_1,0,\dots,0)\}$ if and only if $\varphi=h(x,y)y^{\frac{n-2}{2}}$ satisfies 
 $$\Delta_\mathbb{H} \varphi - \frac{(n-2)(n-4)}{4}\varphi=0.$$

\subsection*{Radial Fundamental solution to $\Delta + \frac{1}{4} $ on $\mathbb{H}$ and Newtonian potential of a uniform charge on a circular wire.}
We would like to make a remark that is not used in the proofs, but which
motivates studying fundamental solutions on the Lobachevsky plane in
order to generalize the proof of Theorem \ref{Main 2} from the
half-integer case.

Let $\mu$ be the length measure on the circle
$S=\{(0,\cos \theta, \sin \theta)\} \subset \mathbb{R}^3$. Then the
Newtonian potential
\[
u := \mu * \frac{1}{|x|}
\]
is an axially symmetric harmonic function on $\mathbb{R}^3 \setminus S$,
which tends to $+\infty$ near the wire $S$ and satisfies
$\Delta u = -4\pi \mu$ in the sense of distributions. By rotational symmetry,
we can rewrite $u$ in terms of
\[
h(x_1,\sqrt{x_2^2+x_3^2}) = u(x_1,x_2,x_3)
\quad\text{and}\quad
\varphi(x,y) = h(x,y)\,y^{1/2},
\]
so that $\Delta u = -4\pi \mu$ becomes
\[
\Delta_\mathbb{H} \varphi + \frac{1}{4} \varphi = -c \,\delta_{(0,1)}
\]
for some numerical constant $c>0$.

We claim that $\varphi$ depends only on the hyperbolic distance to
$(0,1)$; more precisely,
\[
\frac{x^2+y^2+1}{2y}= \cosh d_\mathbb{H}\big((x,y),(0,1)\big).
\]
Indeed,
\begin{equation} \label{eq:radial}
\begin{aligned}
\varphi(x,y)
  &:= h(x,y)\,y^{1/2}
   = y^{1/2}\int_0^{2\pi}
       \frac{d\theta}{\sqrt{x^2 + (y-\cos\theta)^2 + \sin^2\theta}} \\
  &= \int_0^{2\pi}
       \frac{d\theta}{\sqrt{\frac{x^2 + y^2 + 1}{y} - 2\cos\theta}}.
\end{aligned}
\end{equation}

Thus $\varphi(x,y)$ is the radial fundamental solution of
\[
\Delta_\mathbb{H} \varphi + \frac{1}{4} \varphi = -c \,\delta_{(0,1)}
\]
for some numerical $c>0$, and it has the asymptotic behavior
\[
\varphi(x,y)= y^{1/2}\frac{2\pi}{\sqrt{x^2+1}}(1+o(1))
\]
as $y\to 0^+$. This suggests extending
the proof of Theorem \ref{Main 2} to the case $\alpha < -1/2$ by
considering radial fundamental solutions to $\Delta +\lambda$ on
$\mathbb{H}$ with $\lambda<1/4$, instead of axially symmetric
potentials in $\mathbb{R}^n$.

\subsection*{Isometries of Lobachevsky upper half-plane}
We will identify Lobachevsky upper half-plane $\Hh$ with complex numbers $z=x+iy$ with $y>0$. Poincare's metric $ds^2=\frac{dx^2+dy^2}{y^2}$ on $\Hh$ is invariant with respect to 
Mobius transformations:
$$z\to \frac{az+b}{cz+d},$$
where $a,b,c,d \in \mathbb{R},\ ad - bc = 1$. It is convenient to identify the group of $2\times2$ matrices with determinant $1$
\[
PSL(2,\mathbb{R}):=
\left\{
\begin{pmatrix}
a & b\\
c & d
\end{pmatrix}
: a,b,c,d \in \mathbb{R},\ ad - bc = 1
\right\}/ \{\pm I\}
\]
 with the group of Mobius transformations. For $A=\begin{pmatrix}
a & b\\
c & d
\end{pmatrix}$ and a complex number $z$ in $\Hh$, we will denote $\frac{az+b}{cz+d}$ by $A\cdot z$. 
Like Poincar\'e's metric, the Laplace--Beltrami operator on the upper half--plane
\[
\Delta_{\mathbb H} u = y^{2}\bigl(u_{xx} + u_{yy}\bigr)
\]
is invariant under M\"obius transformations. In particular, if a function
\(\varphi\) satisfies
\[
\Delta_{\mathbb H} \varphi + \lambda\varphi = 0,
\]
then, for every M\"obius transformation \(A\) of \(\mathbb H\), the function
\(\varphi(A\cdot z)\) is also an eigenfunction with the same eigenvalue.
\subsection*{Discrete groups generated by the Kelvin transform and similarities in $\mathbb{R}^3$ and modular harmonic functions.}
Suppose we are given an eigenfunction \(\varphi\) of the hyperbolic Laplacian,
\[
\Delta_{\mathbb H} \varphi + \frac{1}{4}\varphi = 0,
\]
which is invariant under a discrete subgroup of \(SL(2,\mathbb R)\) acting by
isometries on \(\mathbb H\). A natural question is how these symmetries of
\(\varphi\) are transferred to the corresponding axially symmetric harmonic
function.
\begin{itemize}
    \item Translation $(x,y) \to (x+t, y)$ is an isometry on $\Hh$ and it  corresponds to a translation $(x_1, x_2, x_3) \to (x_1+t, x_2, x_3)$ in $\R^3$, which preserves harmonicity.
    \item Homothety $(x,y) \to (tx, ty)$ is also an isometry on $\Hh$ and 
     it  corresponds to homothety $(x_1, x_2, x_3) \to (tx_1, tx_2, tx_3)$ in $\R^3$.
    \item The inversion $z \to {-1/z}$, corresponds to the Kelvin transform in $\R^3$, given by $u(x) \to |x|^{-1} u(x/|x|^2)$, which preserves harmonicity in $\mathbb{R}^3\setminus\{0\}$.
\end{itemize}
 If an eigenfunction $\varphi$, $\Delta_{\mathbb{H}}\varphi+\frac{1}{4}\varphi=0$ has a symmetry, say $\varphi(z)=\varphi\left(\frac{az+b}{cz+d}\right)$ for some $\begin{pmatrix}
a & b\\
c & d
\end{pmatrix}\in SL(2, \R)$, then axially symmetric harmonic function $u$ in $\mathbb{R}^3$, which we will identify with $h(z):=\varphi(z)\Im(z)^{-1/2}$, behaves like a modular form of weight $1$:
\begin{equation}
    h\left( \frac{az+b}{cz+d}\right)= |cz+d|h(z).
\end{equation}
It follows from the calculation of imaginary part
$$  \Im\!\left(\frac{az + b}{cz + d}\right)=\Im\!\left(\frac{(az + b)\,\overline{(cz + d)}}{|cz + d|^2}\right)= \frac{\Im(z)(ad-bc)}{|cz + d|^2}=\frac{\Im(z)}{|cz + d|^2}.$$
\subsection*{One dimensional modular distributions of weight $5$ and bounded eigenfunctions with eigenvalue $\frac14$ on $\mathbb{H}$}

Suppose we have a bounded solution $\varphi$ to
\[
\Delta_{\mathbb H}\varphi + \frac{1}{4}\varphi = 0
\]
in the upper half–plane. As before, we associate to $\varphi$ an axially symmetric harmonic function
\[
u(x_1,x_2,x_3) = \varphi(x_1,r)\,r^{-1/2}, \qquad r = \sqrt{x_2^2 + x_3^2},
\]
which is harmonic in $\R^3\setminus\{(x_1,0,0):x_1\in\R\}$. Since $\varphi$ is bounded and
$r^{-1/2}$ is locally integrable with respect to the measure $r\,dr\,d\theta\,dx_1$ in cylindrical coordinates, 
$u$ is locally integrable in $\R^3$, and we can consider the distribution
\[
\nu := \Delta u \in \mathcal D'(\R^3).
\]

By construction, $u$ is harmonic away from the $x_1$–axis, hence
\[
\text{supp} (\nu) \subset \{(x_1,0,0):x_1\in\R\}.
\]
One can informally think that $u$ is a Newtonian potential generated by a distribution $\nu$ supported on $x_1$ axis.

\textbf{Claim} If a harmonic function $u$ in $\R^3\setminus\{(x_1,0,0):x_1\in\R\}$ satisfies $|u|\leq C r^{-1/2}$, then for any $f \in C_c^{\infty}(\mathbb{R}^3)$ vanishing on the $x_1$-axis, we have
$$ <\Delta u, f>:=\int_{\mathbb{R}^3} u \Delta f=0.$$

We postpone the proof of the claim for a few sentences. The claim implies that there is a one-dimensional distribution $\mu \in \mathcal{D}'(\R)$ such that
$$ <\nu,f>=<\mu(t),f(t,0,0)>.$$
In particular, the claim implies that $\mu=\Delta u$ does not contain derivatives of measures in the radial directions like $\delta_{x_2x_2}+\delta_{x_3x_3}$. 
We will call $\mu$ the restriction of $\nu$ to the $x_1$-axis.

\begin{proof}

Let $r=\sqrt{x_2^2+x_3^2}$ be the distance to the $x_1$-axis. By assumption,
$u$ is harmonic in $\R^3\setminus\{r=0\}$ and
\[
|u(x)| \;\leq\; C\,r(x)^{-1/2}.
\]
By Cauchy estimates, it yields a bound for the gradient of $u$: 
 \[
|\nabla u(x)| \;\leq\; C_1 \frac{\max
\limits_{B_{r(x)/2}(x)}|u|}{r(x)/2} \leq C_2 \,r(x)^{-3/2}.
\]

Since $f\in C_c^\infty(\R^3)$ vanishes on the $x_1$-axis, we have
\begin{equation}\label{eq:f-r}
|f(x)| \le C_f\,r(x),
\qquad
|\nabla f(x)| \le C_f,
\end{equation}
for some constant $C_f$ depending on $f$.

Let $R>0$ be such that $\text{supp} f\subset B_R(0)$. For $0<\varepsilon<R$ set
\[
\Omega_\varepsilon := B_R(0)\cap\{r>\varepsilon\}.
\]
Then $u$ is harmonic in $\Omega_\varepsilon$, so by Green’s formula
\begin{equation}\label{eq:ibp}
\int_{\Omega_\varepsilon} u\,\Delta f\,dx=\int_{\Omega_\varepsilon} (u\,\Delta f - f\,\Delta u)\,dx
= \int_{\partial\Omega_\varepsilon} (u\,\partial_n f - f\,\partial_n u)\,dS 
= \int_{\Sigma_\varepsilon} (u\,\partial_n f - f\,\partial_n u)\,dS,
\end{equation}
where
\[
\Sigma_\varepsilon := \{x\in B_R(0): r(x)=\varepsilon\}
\]
is the inner cylindrical part of  $\partial\Omega_\varepsilon$. We note that the area of 
$\Sigma_\varepsilon$ is $O(\varepsilon)$ as $\varepsilon \to 0$.

Using the Cauchy estimates for $u$, we get 

$$\int_{\Sigma_\varepsilon} (u\,\partial_n f - f\,\partial_n u)\,dS= O(\varepsilon^{1/2}).$$

Passing to the limit as $\varepsilon\to 0$ in \eqref{eq:ibp} we obtain
$
\int_{\R^3} u\,\Delta f\,dx = 0.
$

\end{proof}

One can ask what if $\varphi$ is invariant under $A = \begin{pmatrix}a & b\\ c & d\end{pmatrix}\in SL(2,\R)$, i.e.
\[
\varphi\left(\frac{az+b}{cz+d}\right) = \varphi(z)
\quad\text{for all }z\in\Hh,
\]
and how it translates to symmetries of $\mu$.
The way $\mu$ transforms under hyperbolic isometries is dictated by the behavior of the Laplacian under the Kelvin transform in $\R^3$,
\[
(Ku)(x) = |x|^{-1}u\!\left(\frac{x}{|x|^2}\right).
\]
For $u\in C^\infty(\R^3\setminus\{0\})$ one has
\begin{equation}\label{eq:Kelvin-laplacian}
\Delta (Ku)(x)
= |x|^{-5}\,\bigl(\Delta u\bigr)\!\left(\frac{x}{|x|^2}\right)
\quad\text{in }\R^3\setminus\{0\}.
\end{equation}
If $\varphi$ is invariant under the inversion, i.e.
\[
\varphi\!\left(-\frac{1}{z}\right) = \varphi(z),
\]
then the corresponding axially symmetric harmonic function $u$ is invariant under the Kelvin transform, $Ku=u$. 
Combining this with \eqref{eq:Kelvin-laplacian} it formally (but not mathematically) gives
\[
\nu(x) = |x|^{-5}\,\nu\!\left(\frac{x}{|x|^2}\right),
\]
but the right-hand side does not  make sense in terms of distributions in $D'(\R^3)$. We can try to formulate the modular property in terms of test functions as
\begin{equation}\label{eq:weight5-inversion}
  \big\langle \mu,f\big\rangle
  = \left\langle \mu,\;|t|^{-5}\,
      f\!\left(-\frac{1}{t}\right)\right\rangle.
\end{equation}
and the latter identity is straightforward to verify for $f\in C_c^\infty(\mathbb{R}\setminus\{0\})$ (using that $Ku=u$).
However if $f\in C_c^\infty(\mathbb{R})$, then the function $|t|^{-5}\, f\!\left(-\frac{1}{t}\right)$ can be smoothly extended by $0$ at the origin, but it is not compactly supported, so the right hand side is not formally defined and one has to work with a different space of test functions
$$Q_5(\mathbb{R}):=
\{f\in C^\infty(\mathbb{R}) \text{ such that } |t|^{-5}\, f\!\left(\frac{1}{t}\right) \text{ has a } C^\infty( \mathbb R) \text{ extension at } 0\}$$
with convergence $f_n \to f$ in $Q_5$ defined by two conditions: $f_n \to f$ in $C^\infty$ and $|t|^{-5}\,f_n\!\left(\frac{1}{t}\right) \to |t|^{-5}\,f\!\left(\frac{1}{t}\right) $ in $C^\infty$.
Every function $f$ in $Q_5$ can be written (non-uniquely) as a sum $$f=f_1+|t|^{-5}\,f_2\!\left(\frac{1}{t}\right)$$ 
for some $f_1,f_2 \in C_c^\infty(\mathbb{R})$.
It is straightforward to check that the space $Q_5$ is invariant with respect to transformations $$f(t) \to |ct+d|^{-5}\,f\!\left(\frac{at+b}{ct+d}\right)$$
for $A = \begin{pmatrix}a & b\\ c & d\end{pmatrix}\in SL(2,\R)$.

If $\varphi$ is invariant under $A = \begin{pmatrix}a & b\\ c & d\end{pmatrix}\in SL(2,\R)$, and $c\neq0$,
then the constructed one–dimensional distribution $\mu$ can be extended as continuous linear functional from $C_c^\infty(\mathbb{R})$ to $Q_5$ and it should satisfy
\begin{equation}\label{eq:weight5-general}
  \big\langle \mu,f\big\rangle
  = \left\langle \mu,\;
     |ct+d|^{-5}\,
     f\!\left(\frac{at+b}{ct+d}\right)\right\rangle,
  \qquad f\in Q_5(\R). 
\end{equation}
We skip the non-difficult proof of \eqref{eq:weight5-general}. This observation was not used in this article. 

\subsection*{Asymptotics of radial solutions to $\Delta_\mathbb{D} u-\beta(\beta+1) u =0$ in $\mathbb{D}\setminus\{0\}$.}

In this section we prove Lemma \ref{Lemma:asymptotics}.
We begin with a simple observation that the functions
\[
u_1(z) = y^{-\beta}, \qquad u_2(z) = y^{\beta+1}, \qquad z=x+iy,
\]
both satisfy 
\[
\Delta_{\mathbb{H}} u - \beta(\beta+1)u = 0 \quad \text{in } \mathbb{H},
\]
where
\[
\Delta_{\mathbb{H}} = y^{2}\left(\frac{\partial^{2}}{\partial x^{2}}
+ \frac{\partial^{2}}{\partial y^{2}}\right).
\]

Next we pass to the Poincaré disk model
\[
\mathbb{D} = \{z\in\mathbb{C} : |z|<1\}
\]
via conformal mapping
\[
\psi:\mathbb{D}\to\mathbb{H}, \qquad
\psi(z) = i\,\frac{1+z}{1-z}.
\]
This map is an isometry between the hyperbolic metrics on $\mathbb{D}$ and $\mathbb{H}$, and if $u := v\circ\psi$, then
\[
\Delta_{\mathbb{H}} v - \beta(\beta+1)v = 0 \quad\text{in }\mathbb{H},
\]
is equivalent to
\[
\Delta_{\mathbb{D}} u - \beta(\beta+1)u = 0 \quad\text{in }\mathbb{D}.
\]

A standard computation shows that
\[
\Im\psi(z)
= \frac{1}{2i}\bigl(\psi(z)-\overline{\psi(z)}\bigr)
= \frac{1}{2i}\left(i\frac{1+z}{1-z} + i\frac{1+\bar z}{1-\bar z}\right)
= \frac{1-|z|^2}{|1-z|^2}, \qquad z\in\mathbb{D}.
\]

Therefore, the images of the two elementary eigenfunctions $y^{-\beta}$ and
$y^{\beta+1}$ under the conformal mapping are
\[
u_1(z) = (\Im\psi(z))^{-\beta}
= \left(\frac{1-|z|^2}{|1-z|^2}\right)^{-\beta}
= \left(\frac{|1-z|^2}{1-|z|^2}\right)^{\beta},
\]
and
\[
u_2(z) = (\Im\psi(z))^{\beta+1}
= \left(\frac{1-|z|^2}{|1-z|^2}\right)^{\beta+1},
\]
and both satisfy
\[
\Delta_{\mathbb{D}} u - \beta(\beta+1)u = 0 \quad\text{in }\mathbb{D}.
\]

Rotations of the disc
\[
R_\zeta(z) = \bar\zeta\, z, \qquad \zeta\in\partial\mathbb{D},
\]
also preserve the hyperbolic metric and hence the operator $\Delta_{\mathbb{D}}$, so they send eigenfunctions of $\Delta_{\mathbb{D}}$ to eigenfunctions with the same eigenvalue. In particular,
\[
K_\zeta(z)
:= u_1(R_\zeta z)
= \left(\frac{|R_\zeta z-1|^2}{1-|R_\zeta z|^2}\right)^{\beta}
= \left(\frac{|z-\zeta|^2}{1-|z|^2}\right)^{\beta}, \qquad \zeta\in\partial\mathbb{D},
\]
solves
\[
\Delta_{\mathbb{D}} K_\zeta - \beta(\beta+1)K_\zeta = 0 \quad\text{in }\mathbb{D}.
\]

By linearity of the PDE, rotational symmetrisation of these solutions,
\begin{equation} \label{eq:radial 2}
u_\beta(z)
:= \frac{1}{2\pi}\int_{\partial\mathbb{D}} K_\zeta(z)\,d\mu(\zeta)
= \frac{1}{2\pi}\int_{\partial\mathbb{D}}
\left(\frac{|z-\zeta|^2}{1-|z|^2}\right)^{\beta} d\mu(\zeta),
\end{equation}
is again a solution of
\[
\Delta_{\mathbb{D}} u - \beta(\beta+1)u = 0 \quad\text{in }\mathbb{D},
\]
and by construction it is radial: $u_\beta(z)=u_\beta(|z|)$ and by \eqref{eq:radial 2} we have $u_\beta(0)=1$. We claim that such a radial solution is unique. In particular, this implies that
\[
u_\beta(z) = u_{-1-\beta}(z).
\]

Suppose, by contradiction, that there exist two linearly independent radial
solutions of $\Delta_{\mathbb{D}} u - \beta(\beta+1)u = 0$ in $\mathbb{D}$.
Then we can take a non-zero linear combination $u$ such that $u(0)=0$. Let $k$
be the vanishing order of $u$ at $0$.

By Bers' equiangular theorem, the zero set of $u$ near $0$ is a union of 
$C^1$-smooth curves intersecting at $0$ with equal angles between consecutive curves.
On the other hand, since $u$ is radial, its zero set near $0$ consists of circles centered at the origin and $0$. This is a
contradiction. Hence there is at most one radial solution, and the uniqueness
follows.

From now on we will omit $\beta$ in $u_\beta$ and we will simply write $u$. 

Assume now that $\beta>-1/2$, which was not used before this point. In this
case
\[
C_1(\beta)=\frac{1}{2\pi}\int_{\partial\mathbb{D}}
|1-\zeta|^{2\beta} \, d\mu(\zeta)
\]
is a finite positive numerical constant, and it follows directly from
\eqref{eq:radial 2} (by dominated convergence) that
\[
u(z)
= C_1(\beta)\,(1 - |z|^2)^{-\beta}\bigl(1+o(1)\bigr)
\quad\text{as }|z|\to 1^-.
\]
This asymptotic formula implies that
\begin{equation} \label{eq: radial fundamental 2}
g(z)= \frac{u(|z|)}{2\pi}\int_{|z|}^{1}\frac{1}{u(r)^2\,r}\,dr
\end{equation}
is a well-defined radial function in $\mathbb{D}\setminus\{0\}$. 

For a radial function $g$ we have
\[
\Delta_{\mathbb{D}} g(r)
= \frac{(1-r^2)^2}{4}\left(g''(r)+\frac{1}{r}g'(r)\right).
\]
Let us show
that
\[
\Delta_{\mathbb{D}} g - \beta(\beta+1)g = 0 \quad\text{in }\mathbb{D}\setminus\{0\},
\]
which is equivalent to
\[
\frac{(1-r^2)^2}{4}\left(g''(r)+\frac{1}{r}g'(r)\right)
- \beta(\beta+1)g(r)=0,\qquad 0<r<1.
\]

By construction, $u(r)$ is a positive radial solution of the same ODE. Write
\[
g(r)=u(r)\,h(r)
\]
and substitute into the equation. Using that $u$ itself satisfies this ODE, the
terms involving $h$ cancel and we obtain the reduced equation
\[
(1-r^2)^2\left(h''(r)+\Bigl(\frac{1}{r}+2\frac{u'(r)}{u(r)}\Bigr)h'(r)\right)=0,
\]
which is a first-order linear ODE for $h'$, which can be rewritten as
\[
-\left[\log(ru^2) \right]'=-\left(\frac{1}{r}+2\frac{u'}{u}\right)
=\frac{h''}{h'}=\left[\log |h'|\right]'.
\]
Solving it gives
\[
h'(r)=\frac{C}{r\,u(r)^2},\qquad 0<r<1,
\]
for some constant $C$. We can choose $C=\frac{-1}{2\pi}$ and put $h(1)=0$. Integrating from $r$ to $1$ we obtain
\[
h(r)=\frac{1}{2\pi} \int_r^1\frac{1}{u(t)^2\,t}\,dt.
\]
This choice yields a solution
\[
g(r)=u(r)h(r)=\frac{u(r)}{2\pi}\int_r^1\frac{1}{u(t)^2\,t}\,dt.
\]

Next, we investigate the asymptotic behaviour of $g$ at the boundary and at the
origin. Using the asymptotics of $u$ as $r\to1^-$ and
\eqref{eq: radial fundamental 2}, one checks directly that
\[
g(z)
= C_2(\beta)\,(1 - |z|^2)^{\beta+1}\bigl(1+o(1)\bigr), \qquad C_2(\beta) >0,
\]
as $|z|\to 1^-$. On the other hand, since $u(0)=1$ and $u$ is smooth near $0$,
we have $u(r)=1+O(r^2)$ as $r\to0$, so
\[
\int_r^1\frac{1}{u(t)^2\,t}\,dt
= \int_r^1\frac{1}{t}\,dt + O(1)
= -\log r + O(1),
\]
 hence
\[
g(z)\sim \frac{-1}{2\pi}\log|z|\quad\text{and similarly }\quad
\frac{\partial g}{\partial r}(z)\sim \frac{\partial}{\partial r}
\left(\frac{-1}{2\pi}\log|z|\right)
\]
as $z\to 0$. This is consistent with the expected behavior of a fundamental
solution. It also shows that $g$ is locally integrable in $\mathbb{D}$ (the
logarithmic singularity is integrable).

Finally, let $\lambda:=-\beta(\beta+1)$.
By saying that $\Delta_{\mathbb{D}} g + \lambda g = -\delta_0$ (or that $g$ is a fundamental solution of the Laplace--Beltrami operator plus $\lambda$ up to a sign) we mean that $g$ is locally integrable and that for any $\varphi\in C_c^\infty(\mathbb{D})$,
\begin{equation} \label{eq:distributions}
\int_{\mathbb{D}} g\,(\Delta_{\mathbb{D}}\varphi+\lambda\varphi)\,dA_{\mathbb{D}}
= -\varphi(0),
\end{equation}
where $dA_{\mathbb{D}}$ denotes the hyperbolic area measure.
To prove that $g$ is a fundamental solution, we follow the classical argument showing that
$\frac{1}{2\pi}\log|z|$ is a fundamental solution for the Euclidean Laplacian.

To see this, fix
$\varepsilon>0$ small and integrate over $\mathbb{D}\setminus B_\varepsilon$,
where $B_\varepsilon=\{z:|z|<\varepsilon\}$. Since
$\Delta_{\mathbb{D}} g + \lambda g = 0$ on $\mathbb{D}\setminus\{0\}$, Green’s
formula gives
\[
\int_{\mathbb{D}\setminus B_\varepsilon} g\,(\Delta_{\mathbb{D}}\varphi+\lambda\varphi)\,dA_{\mathbb{D}}
= \int_{\partial B_\varepsilon}\bigl(g\,\partial_n\varphi-\varphi\,\partial_n g\bigr)\,dS_{\mathbb{D}},
\]
where $dS_{\mathbb{D}}$ is the
hyperbolic length and $\partial_n$ is the outward normal derivative. The outer boundary term vanishes because $\varphi$ has
compact support in $\mathbb{D}$.

Using the asymptotics of $g$ and $\partial_n g$ near $0$, and the Taylor
expansion $\varphi(z)=\varphi(0)+O(|z|)$, the right-hand side converges, as
$\varepsilon\to0$, to $-\varphi(0)$. Passing to the limit
$\varepsilon\to0$ yields \eqref{eq:distributions}, which means that $\Delta_{\mathbb{D}}g+\lambda g=-\delta_0$ in the sense of
distributions. This completes the proof of the lemma.

\subsection*{Number of nodal domains for solutions to $\Delta u+\lambda u=0$ in a punctured hyperbolic plane  and application to a question of Gabrielov, Novikov and Shapiro.}

In this section we explain how to generalize the proof of Theorem \ref{Main 2} to obtain a linear bound for Conjecture 1.9 from \cite{GNS}.

\begin{theorem} \label{Main 4}
Let $a_k, b_k$ and $c_k$ be real numbers.  Let $\alpha \in (-\infty, -1/2)$. If the function
\[
f(x)=\sum_{k=1}^{n}c_{k}[(x-a_{k})^{2}+b_{k}^{2}]^{\alpha}
\]
is not identically zero, then $f$ has no more than $4n-2$ critical points.
\end{theorem}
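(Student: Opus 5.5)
We have to bound the zeros of $f'$ for $f$ as in Theorem \ref{Main 2}, with $\alpha=-\beta-1$, $\beta>-1/2$. I plan to run the nodal-domain argument of Section \ref{Sec:Proof} again, applied to a derivative of $v$ instead of $v$ itself.

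\textbf{Construction.} Keep $v=\sum_k d_k g_{\mathbb H}(T_k(z))$, whose normalized trace is $f$, meaning $v(x+iy)=y^{\beta+1}(f(x)+o(1))$. Translation in $x$ is an isometry of $\mathbb H$, so $w:=\partial_x v$ still solves $\Delta_{\mathbb H}w+\lambda w=0$ away from the points $p_k=T_k^{-1}(i)$. Differentiating the boundary expansion suggests $w(x+iy)=y^{\beta+1}(f'(x)+o(1))$ as $y\to0$. To justify this I will use the explicit boundary behaviour of $g$ from Lemma~\ref{Lemma:asymptotics}. The cleaner route is to write each term $g(T_k z)$ as $y^{\beta+1}$ times a function that is smooth up to $y=0$: by radial symmetry, $g$ is a function of $\cosh d$, which is an explicit rational expression in $x$, $y$ and $a_k$, $b_k$. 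One can then differentiate in $x$ under the expansion, uniformly on compacts in $x$.

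\textbf{Nodal domains of $w$.} Near each $p_k$ the singularity of $v$ is $c_k$ times a logarithm of the hyperbolic distance. Hence $w$ behaves like $\Re(c/(z-p_k))$, a dipole. By Remark~\ref{Rem: Bers'} with $k=1$, the zero set of $w$ near $p_k$ consists of two curves ending at $p_k$, and $w$ takes both signs there. I will count nodal domains of $w$ in $\mathbb H\setminus\{p_k\}$ using the maximum principle exactly as in Lemma~\ref{Lemma:main}:
\begin{enumerate}
\item Divide by the positive radial solution $u$. Then $w/u$ solves a divergence-form equation and tends to $0$ at $\partial\mathbb D$, because $w=O(y^{\beta+1})$ while $u\sim y^{-\beta}$.
\item Any nodal domain whose closure avoids every $p_k$ would force an interior extremum of $w/u$, which is impossible.
\item So every nodal domain has some $p_k$ on its boundary. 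Near a dipole only two nodal sectors meet, so each $p_k$ touches at most two domains. This gives at most $2n$ nodal domains.
\end{enumerate}

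\textbf{Topological count.} Next I blow up each $p_k$ to a small circle, or simply treat the two zero curves emanating from $p_k$ as curves ending at a puncture. I then apply the topological fact to the perturbed zero set, after Bers-perturbing the interior singular points as before. Removing the $n$ punctures changes the Euler characteristic. The two curves at each $p_k$ join into a single curve passing through $p_k$, and the two sides of that curve are the two domains adjacent to $p_k$. Counting that way, the zero set of $w$ consists of at most $2n-1$ curves. Each curve is analytic and so has at most $2$ limit points on $\partial\mathbb H$. Hence there are at most $4n-2$ limit points.

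\textbf{Every zero of $f'$ is a limit point.} I will repeat the final barrier argument of Section \ref{Sec:Proof} verbatim with $w$ in place of $v$ and $f'$ in place of $f$. That argument only uses the equation, positivity near a boundary point, and the trace asymptotics. This bounds the critical points by $4n-2$. (If $f'\equiv0$, then $f$ is constant; decay at infinity makes $f\equiv0$, which is excluded.)

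\textbf{Main obstacle.} The delicate step is the topological count at the punctures. I must check that the curve through a dipole $p_k$ can be treated like an interior curve, so that $\#\Gamma\le N-1$ still holds with $N\le 2n$. I must also rule out the possibility that a single nodal domain wraps around $p_k$ from both sides, which would break the "two domains per puncture" bookkeeping. The first thing I would try is to insert each $p_k$ together with its two zero arcs into $\Gamma$ as one smooth curve, and verify that the topological fact still applies with $N\le 2n$. A secondary obstacle is establishing the uniform differentiated boundary asymptotics for $w$.
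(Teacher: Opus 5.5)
Your proposal is correct and follows essentially the paper's own sketch. The paper also differentiates the hyperbolic extension in $x$ and applies the maximum-principle count of Lemma~\ref{Lemma:main 2} to $u_x$ with dipole singularities ($A_k=2$), which gives at most $2n$ nodal domains, then reruns the curve count and barrier argument from Theorem~\ref{Main 2}; the paper's text says $2(n-1)$ nodal domains, but $2n$ is what the lemma yields and what $4n-2=2(2n-1)$ needs, so your count is the right one. The obstacles you flag are mild: adjoining each $p_k$ to the zero set leaves the nodal domains unchanged and turns the two arcs at $p_k$ into a single curve through $p_k$, so the topological fact applies directly in $\mathbb{H}$, and no nodal domain can reach $p_k$ from both sides because the two sectors of a dipole carry opposite signs.
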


We will provide only the sketch of the proof, which is parallel to the proof of Theorem \ref{Main 2} and is based on the following lemma, which generalizes lemma \ref{Lemma:main} to the case of solutions to $\Delta u + \lambda u=0$, $\lambda <1/4$ in Poincare's disk with a finite number of points removed, where the solution does not blow up faster than polynomially.

\begin{lemma} \label{Lemma:main 2}
Let $\beta>-1/2$ and $\lambda=-\beta(\beta+1)< \frac{1}{4}$. Let $u$ be a non-zero solution to
\[
\Delta_{\mathbb{D}} u + \lambda u=0 \quad \text{ in punctured Poincare's disc } \quad \mathbb{D}\setminus \{p_k:k=1, \dots, n\},
\]
where $p_k$ are distinct points in $\mathbb{D}$. Assume that $$u(z) = o(1-|z|^2)^{-\beta} \text{ as } z \to \partial \mathbb{D}$$ and that 
$$|u(z)|= o(|z-p_k|^{-A_k}) \text{ as } z\to p_k$$
for some integers $A_k$.
 Then $u$ has at most $N:=\#\{k:A_k=1\} + \sum2(A_k-1)$ nodal domains in $\mathbb{D}$.
\end{lemma}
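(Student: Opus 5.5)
The plan is to reuse the maximum principle argument of Lemma~\ref{Lemma:main}, and replace the fact that a point charge lies inside one nodal domain by a local count of how many nodal domains can touch a puncture $p_k$. Let $v$ be the positive radial solution of $\Delta_{\mathbb D} v+\lambda v=0$ with $v(0)=1$, and put $h=u/v$. By Lemma~\ref{Lemma:asymptotics} we have $v\asymp(1-|z|^2)^{-\beta}$, so $h\to0$ at $\partial\mathbb D$. Away from the punctures, $h$ satisfies $\operatorname{div}(v^2\nabla h)=0$ in the half-plane coordinates. Hence a nodal domain $\Omega$ whose closure contains no puncture would force $|h|$ to have an interior maximum in $\Omega$, which is impossible. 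So every nodal domain accumulates at some $p_k$, and the lemma reduces to bounding the number of nodal domains that \emph{locally} meet each $p_k$.

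The key step is a local expansion of $u$ near a puncture. I will work in geodesic polar coordinates $(r,\theta)$ centred at $p_k$; after a M\"obius map, $p_k=0$ and the metric is radial. I expand $u=\sum_{m\in\mathbb Z}a_m(r)e^{im\theta}$. Each coefficient solves the radial ODE
\[
a_m''+\tfrac1r a_m'-\tfrac{m^2}{r^2}a_m+\tfrac{4\lambda}{(1-r^2)^2}a_m=0 .
\]
Near $r=0$ this ODE has a regular solution $\sim r^{|m|}$ and a singular one, $\sim r^{-|m|}$ for $m\ne0$ and $\sim\log r$ for $m=0$. Each $a_m$ is an average of $u$ over circles, so $a_m(r)=o(r^{-A_k})$. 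This forces the singular part of $a_m$ to vanish for $|m|\ge A_k$. Therefore
\[
u=c_0\,g_k+\sum_{1\le |m|\le A_k-1}(\text{singular mode } m)+(\text{smooth solution near }p_k),
\]
where $g_k$ is the radial fundamental solution centred at $p_k$. This is the step I expect to be the main obstacle. One must justify the termwise ODE analysis, and show that the sum of all regular modes is a genuine smooth solution across $p_k$, for instance by elliptic regularity applied to $u$ minus the finitely many singular modes, which is then $o(r^{-1})$ and has a removable singularity.

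Next comes the case analysis at each $p_k$. Let $m_0$ be the largest $|m|$ with a nonzero singular mode. If $m_0\ge1$, then $u=\Re(c\,z^{-m_0})+O(r^{-m_0+1})$, with the matching gradient bound, in local coordinates. By Remark~\ref{Rem: Bers'} the zero set near $p_k$ consists of $2m_0\le 2(A_k-1)$ curves, so at most $2(A_k-1)$ nodal domains touch $p_k$; this case only occurs when $A_k\ge2$. If $m_0=0$ and $c_0\neq0$, then $u\to\pm\infty$ logarithmically and exactly one nodal domain touches $p_k$. This case is available for every $A_k\ge1$, and it is the only singular case when $A_k=1$. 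If there is no singular part, $p_k$ is a removable singularity. The maximum principle for $h$ then applies across $p_k$, so $p_k$ can be dropped from the list of punctures.

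Finally, every nodal domain contains one of the local sectors at some non-removable puncture; otherwise the argument of the first paragraph applies after filling in the removable punctures. Summing the local counts gives at most $1$ for each $k$ with $A_k=1$. For each $k$ with $A_k\ge2$ it gives at most $\max(1,2(A_k-1))=2(A_k-1)$. This yields the bound $N=\#\{k:A_k=1\}+\sum_k 2(A_k-1)$, where punctures with $A_k\le 0$ are removable and contribute nothing.
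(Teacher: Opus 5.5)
Your proposal is correct and follows essentially the paper's own sketch. The ratio $u/v$ with the positive radial solution, together with the divergence-form maximum principle, forces every nodal domain to reach a puncture. The Fourier-mode analysis at each $p_k$ then bounds how many nodal domains touch it: a logarithmic singularity, a pole of order at most $A_k-1$ handled by the pole version of Bers' theorem, or a removable singularity. Your explicit treatment of removable punctures and of the $o(|z-p_k|^{-A_k})$ threshold is slightly more careful than the paper's, and for $m_0=1$ your error term also contains $c_0\log r$, so it is $o(r^{-1})$ rather than $O(1)$; this is still enough for the zero-set analysis.
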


We won't give a complete proof of Lemma \ref{Lemma:main 2}, which is based on the argument of Lemma \ref{Lemma:main}. But the required modifications of the argument are outlined below.  First, we review how the solutions of elliptic PDE behave near the singular points $p_k$.

 \textbf{Claim.} Let $u$ be a real solution to $\Delta_{\mathbb D} u +\lambda u=0$ in a punctured neighbourhood of $0$, such that $|u(z)|= O(|z|^{-K})$ as $|z| \to 0$ for some integer $K$, then $u$ has one of the following asymptotics:
\begin{itemize}
    \item There is a $C^\infty$ smooth extension of $u$ through 0 or
    \item $u(z)=(1+o(1))c_0\log|z|$ for some real non-zero $c_0$ or
     \item \begin{equation} \label{eq: Bers pole}
        u(z)=\Re(c_0z^{-k})+o(|z|^{-k}), \quad \nabla u(z)=\nabla\Re( c_0z^{-k})+O(|z|^{-k-1}),
     \end{equation}
as $|z|\to 0$ for some integer $k\leq K$ and $c_0 \in \mathbb{C}\setminus\{0\}$.
\end{itemize}

The claim is known to specialists for more general two-dimensional linear elliptic PDE, but in this case
one can use the symmetry of the space. The idea of the proof is sketched below.
One can decompose $u$ into Fourier series in a punctured neighborhood of zero
$$ u(r,\varphi) = \sum_{k \in \mathbb{Z}}u_k(r,\varphi), $$
where
$$u_k(r,\theta):= \frac{1}{2\pi}\int_0^{2\pi}u(r,\varphi)e^{ik(\theta-\varphi)}\,d\varphi,$$
is a convolution of $u$ with $e^{ik\varphi}$ with respect to rotations around $0$.
In particular, $u_k$ is a solution to the same PDE, $\Delta_{\mathbb D} u_k +\lambda u_k=0$,
in a punctured neighborhood of zero and $u_k(r,\varphi)=v_k(r)e^{ik\varphi}$, where $v_k$ is a solution to linear ODE of second order:
\begin{equation}
v_k''(r) + \frac{1}{r} v_k'(r) - \frac{k^2}{r^2} v_k(r)
    + \frac{4\lambda}{(1-r^2)^2} v_k(r) = 0.
\end{equation}
 
The decomposition $u(r,\varphi) = \sum_{k \in \mathbb{Z}}u_k(r,\varphi)$ is an orthogonal decomposition of $u(r,\cdot)$ into Fourier series on the circle of radius $r$. 
For $k=0$, we have already shown that there are two linearly independent solutions: one has analytic extension at $0$ and the other behaves like $\log|z|$. 

For $|k|>0$ there are two linearly independent solutions, which have asymptotics $r^{-|k|}$ and $r^{|k|}$ at $0$.
Note that  $u(r,\varphi) = O(r^{-K})$ implies $v_k(r)=O(r^{-K})$ as $r \to 0$. For $|k|>K$ the space of the solutions to the ODE for $v_k$ with $v_k(r)=O(r^{-K})$ is one-dimensional (and consists of solutions with asymptotics $cr^{|k|}$ at $0$). In particular, for $|k|>K$ the functions $u_k$ extend smoothly across $0$.

It remains to check that $\sum\limits_{|k|>K} u_k$ converges to a smooth solution in a neighborhood of zero, while the finite sum $\sum\limits_{|k|\leq K} u_k$ has asymptotic behavior of the form \eqref{eq: Bers pole}. We leave out these details.

The claim implies that Bers' equiangular property still holds near point singularities of real solutions to $\Delta u + \lambda u=0$, where $u$ blows up with at most polynomial speed. 

We are ready to finish the proof of Lemma \ref{Lemma:main 2}. Consider the nodal domains $\Omega_i$ of $u$, by which we mean the maximal open sets in $\mathbb{D}$ which do not intersect the zero set of $u$. As in the proof of Lemma \ref{Lemma:main}, we can apply the maximum principle
for the ratio of $u$ and a radial positive solution $v$ to conclude that
each nodal domain $\Omega_i$ must have one of the singular points $p_k$ within its closure: $p_k \in \overline{\Omega_i}$. 

If a singular point $p_k$ has a logarithmic singularity, then it belongs to exactly one nodal domain. If $p_k$ has a pole of order $A_k-1$ and satisfies \eqref{eq: Bers pole} with $A_k-1$ in place of $k$, then at most $2(A_k-1)$ nodal domains have $p_k$ on their boundary.
Since the closure of every nodal domain must contain at least one of the points $p_k$, we conclude that the total number of nodal domains is bounded by the number of logarithmic singularities plus
$
\sum 2(A_k-1),
$
where the sum is taken over all singular points.

We have finished the sketch of the proof of Lemma \ref{Lemma:main 2}. Let's explain how it could be applied to Theorem \ref{Main 4}. We now recall the hyperbolic extension of $f=\sum_{k=1}^{n}c_{k}[(x-a_{k})^{2}+b_{k}^{2}]^{-\beta-1}$, $\beta>-1/2$, from the proof of
Theorem~\ref{Main 2}. Let $p_k:=a_k+ib_k$ be points in the upper hyperbolic plane and let $T_k$ be Mobius transformation of $\mathbb{H}$, which send $i$ to $p_k$ correspondigly. There are real coefficients $d_k$ and a solution $u$ to
\[
\Delta_{\mathbb H}u-\beta(\beta+1) u
  = -\sum_{k=1}^n d_k\,\delta_{p_k},
\]
which satisfies
\begin{equation}\label{eq:normalised-trace 2}
  u(x+iy)
    = y^{\beta+1}\bigl(f(x)+o(1)\bigr)
    \qquad\text{as }y\to0^+,
\end{equation}
where $o(1)$ is uniform in $x$. The function 
\[
u(z)
  := \sum_{k=1}^n d_k\,g_{\mathbb H}\bigl(T_k(z)\bigr),
  \qquad z\in\mathbb H\setminus\{p_1,\dots,p_n\},
\]
was constructed as a linear combination of radial fundamental solutions with logarithmic singularities at the points $p_k$.   We devoted a whole section of appendix for an explicit formula \eqref{eq: radial fundamental 2} for $g_\mathbb{H}$, which allows calculations of asymptotics for $g$.

Now, we are ready to finish the sketch of the proof of Theorem \ref{Main 4}. 
Consider the function $u_x$, which is also a solution to $\Delta u_x-\beta(\beta+1)u_x$ in $\mathbb{H}\setminus\{p_1,\dots,p_n\}$. It is not difficult to check that $u_x$ satisfies the assumptions of Lemma \ref{Lemma:main 2} and has poles of order $1$ at the points $p_k$, which yields that $u_x$ has at most $2(n-1)$ nodal domains. We skip  an asymptotic check that 
\begin{equation}
  u_x(x+iy)
    = y^{\beta+1}\bigl(f'(x)+o(1)\bigr)
    \qquad\text{as }y\to0^+.
\end{equation}
We also skip repeating the remaining argument from the proof of Theorem~\ref{Main 2} for $u_x$ in place of $u$, which shows that the zero set of $u_x$ has no more than $4n-2$ limit points.


\begin{thebibliography}{99}


\bibitem{A} N.~Alon, \emph{Tools from higher algebra}, in \emph{Handbook of Combinatorics}, Vol.~2, R.~L.~Graham, M.~Gr\"otschel and L.~Lov\'asz (eds.), North-Holland, Amsterdam, 1995, pp.~1749--1783.

\bibitem{ADK} N.~Alon, C.~Defant, N.~Kravitz and D.~G.~Zhu, \emph{Ordering candidates via vantage points}, Combinatorica \textbf{45} (2025), no.~2, Paper No.~25, 28~pp.

\bibitem{AEH}
C.~Am{\'e}ndola, A.~Engstr{\"o}m, and C.~Haase,
\newblock \emph{Maximum number of modes of Gaussian mixtures},
\newblock Information and Inference: A Journal of the IMA, \textbf{9}(3):587--600, 2020.
\newblock doi{10.1093/imaiai/iaz013}.

\bibitem{ABK}
P.~Arathoon, G.~Ball, and M.~D.~Kvalheim, \emph{The Maxwell conjecture is false}, preprint, arXiv:2607.27197 (2026). 

\bibitem{Eremenko2015}
A.~Eremenko, \emph{Equilibrium points of the potential}, unpublished notes. Available at: \url{https://www.math.purdue.edu/~eremenko/dvi/equil2.pdf}

\bibitem{GNS} A.~Gabrielov, D.~Novikov and B.~Shapiro, \emph{Mystery of point charges}, Proc. London Math. Soc. (3) \textbf{95} (2007), no.~2, 443--472.

\bibitem{Hormander}
L.~H\"ormander, \emph{Linear Partial Differential Operators}, Berlin-Heidelberg-NewYork:
Springer, 1963.


\bibitem{Ke}
E.~W. Kennedy, 
\emph{An Elementary Proof of the Frobenius Factorization Theorem for Differential Equations}, 
Canadian Mathematical Bulletin, \textbf{17}(3) (1974), pp.~379--380. 



\bibitem{K} A.~G.~Khovanskii, \emph{Fewnomials}, Translations of Mathematical Monographs, vol.~88, Amer. Math. Soc., Providence, RI, 1991.

\bibitem{LMP} M.~Levitin, D.~Mangoubi and I.~Polterovich, \emph{Topics in spectral geometry}, Graduate Studies in Mathematics, Vol.~237, Amer. Math. Soc., Providence, RI, 2023.

\bibitem{M} J.~C.~Maxwell, \emph{A Treatise on Electricity and Magnetism}, 3rd ed., Dover Publications, New York, 1954. (Reprint of the 1891 edition.)



\bibitem{NY}
D.~Novikov and S.~Yakovenko,
\emph{Simple exponential estimate for the number of real zeros of complete abelian integrals},
Annales de l'Institut Fourier (Grenoble), \textbf{45}(4) (1995), pp.~897--927.

\bibitem{Oliveira}
Gon\c{c}alo Oliveira, Critical points of point charge potentials along lines, in preparation.





\end{thebibliography}
\end{document}